\newlist{myenumerate}{enumerate}{1}
\setlist[myenumerate,1]{label=(\roman*)}
\newtheorem{thm}{Theorem}[section]
\newaliascnt{lem}{thm}
\newtheorem{lem}[lem]{Lemma}
\newaliascnt{cor}{thm}
\newtheorem{cor}[cor]{Corollary}
\newaliascnt{prop}{thm}
\newtheorem{prop}[prop]{Proposition}
\newaliascnt{dfn}{thm}
\newaliascnt{rem}{thm}
\theoremstyle{definition}
\newtheorem{conj}[thm]{Conjecture}
\newtheorem{ex}[thm]{Example}
\newtheorem*{claim*}{Claim}
\theoremstyle{remark}
\numberwithin{equation}{thm}
\def\Hom{\operatorname{Hom}}
\def\ord{\operatorname{ord}}
\def\lto{\longrightarrow}
\def\Ext{\operatorname{Ext}}
\def\syz{{\rm syz}}
\def\mod{\operatorname{mod}}
\def\im{\operatorname{Im}}
\def\ker{\operatorname{Ker}}
\def\m{\mathfrak{m}}
\def\n{\mathfrak{n}}
\def\depth{\operatorname{depth}}
\def\soc{\operatorname{Soc}}
\begin{document}

\title{Syzygies of the residue field over Golod rings}
\date{March 2, 2025}

\author[\fontencoding{T5}\selectfont D.T. C\uhorn{}\`\ohorn ng]{\fontencoding{T5}\selectfont \DJ o\`an Trung C\uhorn{}\`\ohorn ng} 
\address[\fontencoding{T5}\selectfont \DJ o\`an Trung C\uhorn{}\`\ohorn ng] {Institute of Mathematics, Vietnam Academy of Science and Technology, 18 Hoang Quoc Viet, 10072 Hanoi, Viet Nam.} \email{dtcuong@math.ac.vn}

\author[H.Dao]{Hailong Dao}
\address[Hailong Dao]{Department of Mathematics, University of Kansas, Lawrence, KS 66045}
\email{hdao@ku.edu}
\urladdr{https://www.math.ku.edu/~hdao/}

\author[D. Eisenbud]{David Eisenbud}
\address[David Eisenbud]{Department of Mathematics, University of California, Berkley, CA 94720}
\email{de@berkeley.edu}
\urladdr{eisenbud.github.io}

\author[T. Kobayashi]{Toshinori Kobayashi }
\address[Toshinori Kobayashi]{Department of Mathematics, School of Science and Technology, Meiji University, 1-1-1 Higashi-mita, Tama-ku, Kawasaki 214-8571, Japan.}
\email{tkobayashi@meiji.ac.jp}

\author[C. Polini]{Claudia Polini}
\address[Claudia Polini]{Department of Mathematics, Notre Dame University, South Bend, IN 46556}
\email{cpolini@nd.edu}
\author[B. Ulrich]{Bernd Ulrich}
\address[Bernd Ulrich]{Department of Mathematics, Purdue University, West Lafayette, IN 47907}
\email{bulrich@purdue.edu}

\subjclass[2020]{13D02, 13H10}
\keywords{syzygy, resolutions}
\thanks{HD was partly supported by Simons Foundation grant MP-TSM-00002378. DE is grateful to the
National Science Foundation for partial support through grant 2001649.  CP and BU were partially supported by NSF grants DMS-2201110 and DMS-2201149, respectively. DTC was funded by Vingroup Joint Stock Company and supported by Vingroup Innovation Foundation (VinIF) under the project code VINIF.2021.DA00030.
This material is partly based upon work supported by the National Science Foundation under Grant No. DMS-1928930, while four of the authors were in residence at SLMath in Berkeley, California, during the Special Semester in Commutative Algebra, Spring 2024.
}
\begin{abstract}
Let $(R,\m,k)$ be a Golod ring. We show a recurrence formula for high syzygies of $k$ in terms of previous ones. In the case of embedding dimension at most $2,$ we provided complete descriptions of all indecomposable summands of all syzygies of $k.$ 
\end{abstract}
\maketitle
\section{Introduction}

Since the seminal work of Hilbert significant advances have been made in  understanding the structure of finite free resolutions. However, much less is known about infinite free resolutions, which are quite common, as most minimal free resolutions over Noetherian local ring are infinite. Unfortunately, the standard techniques used to study finite free resolutions rarely apply to infinite resolutions.

This paper deals with minimal free resolutions of finitely generated modules over Noetherian local rings, with emphasis on the residue field. While there are numerous results and conjectures about Betti numbers  \cite{Gu67, Gu71, GU80, A90, Le90, AGP97, A96, MP15}, our focus instead is on the structure of syzygy modules and finiteness properties of these in general infinite resolutions.
 

We will focus on Golod rings. They  appear naturally in many contexts. Suppose that $R = S/I$ with $(S,\n)$ regular local (or graded)
of dimension $e$ and $I \subset \n^2$ (so that $e$ is the embedding dimension of $R$). Then $R$ is Golod, for example, if $I$ 
has codimension one \cite{Avramov2010}; or 
 if $e=2$ and $R$ is not a zero-dimensional complete intersection \cite{Sc64}; or if $R$ is a local ring
 of ``minimal multiplicity''  $e-\dim R +1$ \cite{A90}
 ; or if $R$ is graded and $I$ has linear resolution  \cite{BF85}  or is componentwise linear  \cite{HRW99}; or  if $I$ is a Borel fixed monomial ideal \cite{AH96, Pee96}; or if $I = J^{s}$ is a power (or a symbolic power) with   $s\ge 2$ \cite{HH13}. The Golod property is stable under factoring out a regular sequence that is part of a regular system of parameters of $S$ \cite{A90}.  

Let $(R,\m,k)$ be a Noetheran local ring of embedding dimension $e.$ We prove that if $R$ is Golod then every syzygy module of the $R$-module $k$ is a direct sum of copies of the first $e+1$ syzygy modules, $\syz_i^R(k)$ for $0 \leq i \leq e,$ and we give a recursive formula for the number of copies: 
 
\begin{thm}\label{generalGolod}
Let $(R,\m,k)$ be a Noetherian local ring of embedding dimension $e.$ Let $K_{\bullet}$ be the Koszul complex
of a minimal set of generators of $\m.$ If $R$ is Golod then 
$$
\syz_{e+1}^R(k) = \bigoplus_{j= 0}^{e-1} \syz_j^R(k)^{h_{e-j}}
$$
and, more generally,
for every $i>e,$
$$\syz^R_i(k)=\bigoplus_{i-e-1 \leq j \leq i-2} \syz^R_j(k) ^{ h_{i-j-1}}\, , $$
where $h_{i-j-1} = \dim_k(H_{i-j-1}(K_{\bullet})).$ 
\end{thm}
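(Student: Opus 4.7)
The plan is to reduce the theorem to the base case $i = e+1$ via an inductive argument, and then prove the base case using the Eagon--Shamash explicit minimal free resolution of $k$. For any finitely generated $R$-modules $A$ and $B$, the minimal free presentation of $A \oplus B$ is the direct sum of the individual minimal presentations, so $\syz_1^R(A \oplus B) \simeq \syz_1^R(A) \oplus \syz_1^R(B)$; combined with $\syz_1^R(\syz_j^R(k)) \simeq \syz_{j+1}^R(k)$, this yields the inductive step. Assuming the theorem at $i-1 > e$, i.e.\ $\syz_{i-1}^R(k) \simeq \bigoplus_{\ell=1}^{e} \syz_{i-\ell-2}^R(k)^{h_\ell}$, applying $\syz_1^R$ to both sides produces the formula at $i$. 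Hence it suffices to prove the base case $\syz_{e+1}^R(k) \simeq \bigoplus_{s=1}^{e} \syz_{e-s}^R(k)^{h_s}$.

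For the base case, we invoke the Eagon--Shamash construction, which for a Golod ring $R$ provides an explicit minimal free resolution $F_\bullet$ of $k$ with the recursive structure
\[ F_n = K_n \oplus \bigoplus_{s=1}^{e} W_s \otimes_R F_{n-s-1}, \]
where $W_s$ is a free $R$-module of rank $h_s$. The differential is the Koszul differential on $K_n$, and on an element $u \otimes f \in W_s \otimes F_{n-s-1}$ it equals $\tilde u \cdot f + (-1)^{s+1} u \otimes d(f)$, where $\tilde u \in K_s$ is a chosen cycle lift of $u$ and $\cdot$ denotes the natural DG-module action of $K_\bullet$ on $F_\bullet$. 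Since $K_{e+1}=0$, we have $F_{e+1} = \bigoplus_{s=1}^{e} W_s \otimes F_{e-s}$, and $\syz_{e+1}^R(k) = \im(d_{e+1})$.

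For each $s$ and each basis element $u$ of $W_s$, the restriction of $d_{e+1}$ to $Ru \otimes F_{e-s} \cong F_{e-s}$ has the form $f \mapsto \tilde u \cdot f + (-1)^{s+1} u \otimes d(f)$. The second summand lies in $W_s \otimes F_{e-s-1}$ and, as $f$ varies over $F_{e-s}$, sweeps out $u \otimes \syz_{e-s}^R(k)$. Combining these contributions over all $s$ and all basis elements yields a natural $R$-linear map
\[ \Psi \colon \bigoplus_{s=1}^{e} \syz_{e-s}^R(k)^{h_s} \longrightarrow \syz_{e+1}^R(k). \]
Both sides have the same minimum number of generators $\beta_{e+1} = \sum_{s=1}^{e} h_s \beta_{e-s}$ by the Poincar\'e series formula for Golod rings, and all higher Betti numbers match using $\dim_k \Tor_i^R(\syz_j^R(k), k) = \beta_{i+j}^R(k)$ together with the Golod recursion.

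The main obstacle is to show rigorously that $\Psi$ is an isomorphism. The cross-term $\tilde u \cdot f$ in the differential distributes across $K_e$ and across other summands $W_{s'} \otimes F_{e-s'-1}$, depending on the internal word structure of $f$, so the direct-sum decomposition of $F_{e+1}$ does not descend naively to $\syz_{e+1}^R(k)$. The Golod hypothesis---equivalently, the triviality of Massey operations on $K_\bullet$---is essential: it forces these cross-terms to assemble compatibly so that $\Psi$ is injective, and hence an isomorphism.
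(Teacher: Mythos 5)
Your inductive reduction to the case $i=e+1$ is fine, and your description of the Golod (Eagon--Shamash) resolution $F_n = K_n \oplus \bigoplus_{s=1}^{e} W_s \otimes_R F_{n-s-1}$ matches what the paper uses (with $W_s$ corresponding to the paper's $B_{s+1}$). But the base case has a genuine gap, and you have in fact identified it yourself: by taking $\syz_{e+1}^R(k) = \im(d_{e+1})$ you are forced to contend with the cross-terms $\tilde u\cdot f$, which land in $K_e$ and in other $W$-summands and do not respect the direct-sum decomposition of $F_{e+1}$. Your map $\Psi$ is then not even obviously well defined --- it requires choosing a preimage $f$ of each $z\in\syz_{e-s}^R(k)$, and the term $\tilde u\cdot f$ depends on that choice --- and the final paragraph, asserting that ``the Golod hypothesis forces these cross-terms to assemble compatibly,'' is not an argument.

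The correct move, and the one the paper makes, is to go one step higher in homological degree and use the \emph{cokernel} rather than the image. One has $\syz_{e+1}^R(k) = \cok\bigl(d_{e+2}\colon F_{e+2}\to F_{e+1}\bigr)$, and here the cross-term of $d_{e+2}$ would land in $K_{e+1}$, which is zero. Hence $d_{e+2}$ is literally the direct sum over $s$ of $\pm\,\id_{W_s}\otimes d_{e+1-s}$, both $F_{e+2}$ and $F_{e+1}$ have no Koszul part in those degrees, and
$$
\syz_{e+1}^R(k) \;=\; \cok(d_{e+2}) \;\cong\; \bigoplus_{s=1}^{e} W_s\otimes_R \cok(d_{e+1-s}) \;\cong\; \bigoplus_{s=1}^{e}\syz_{e-s}^R(k)^{h_s}
$$
without any choices and without any hidden cancellation. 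The paper in fact does this in one stroke for all $i>e$: it observes that the entire truncated complex $F_{\ge i}$ splits as $\bigoplus_{j=2}^{e+1} F_{\ge i-j}\otimes B_j$ as complexes once all Koszul degrees $\le e$ have dropped out, so every syzygy in that range decomposes simultaneously and the inductive reduction is unnecessary. Your Betti-number count at the end shows the two sides have the same ranks, which is only Golod's Poincar\'e series formula, not the claimed module isomorphism; the cokernel description is what supplies the actual isomorphism.
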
 
This structural result provides a new explanation of Golod's well-known formula~\cite{Golod} for the ranks of the free modules in the minimal resolution of $k,$ which is an immediate consequence.
\autoref{generalGolod} implies that the direct sum decompositions into indecomposables for the first $e+1$ syzygy modules determine such decompositions for all syzygy modules of $k.$
This is in stark contrast to the case of a zero-dimensional Gorenstein ring $R$ with $e \geq 2,$ where the (infinitely many) syzygy modules of $k$ are all indecomposable and non-isomorphic.

We will focus on the case $e=2,$ where the Golod assumption in \autoref{generalGolod}  simply means that 
$R$ is not a zero-dimensional complete intersection \cite{Sc64}. In this case we will give an explicit description of the direct sum decompositions into indecomposables
of the syzygy modules $\syz_i^R(k)$ for all $i.$ By \autoref{generalGolod} it suffices to do this for $\syz_{1}^{R}(k) = \m$ and $\syz_{2}^{R}(k) = \syz_{1}^{R}(\m).$

All our results are preserved and reflected by completion. For example, the number of summands of a finitely generated $R$-module $M$ that are isomorphic to the
residue field $k$ is $\dim_{k}(\soc(M)/(\m M \cap \soc(M))$, and this does not change upon completion. Thus we may assume, without loss of generality, that $R = S/I$,
where $S$ is a regular local ring and  $I\subset \n^2$. Let $e:={\rm dim} \, S$ be the embedding dimension of $R.$ This will be our notation throughout this paper.

Assume that $e=2$ and $I\subset \n^2$ is not
an $\n$-primary complete intersection. We prove that $\syz_1^R(k) = \m$ is decomposable if and only
if $xy \in I$ for some regular system of parameters $x,y$ of $S$ (see \autoref{mindec}). In this case 
$\m = R/(0:x)\oplus R/(0:y)$ is the direct sum decomposition into indecomposables. For $\syz_2^R(k)=\syz_1^R(\m)$
we obtain (see \autoref{mdec}):

\begin{prop}  If $\m$ is decomposable then
$\syz_2^R(k) =   \m \oplus k^a,$ where
$$
a=\begin{cases}
			2 & \text{if $\dim R =0$}\\
            1 & \text{if $\depth R = 0$ and $\dim R =1$}\\
            0 & \text{if $R$ is Cohen-Macaulay of dimension 1} \, .
            \end{cases}
$$
\end{prop}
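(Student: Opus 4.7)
The plan is to compute $\syz_2^R(k) = \syz_1^R(\m)$ directly, leveraging the decomposition $\m = R\bar x \oplus R\bar y$ furnished by \autoref{mindec} (with $x, y$ a regular system of parameters of $S$ such that $xy \in I$). Since minimal resolutions split over direct sums,
\[
\syz_2^R(k) = (0:_R x) \oplus (0:_R y) =: J_1 \oplus J_2,
\]
and the hypothesis $xy \in I$ places $yR \subseteq J_1$ and $xR \subseteq J_2$.

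The central step is to further split $J_1$ and $J_2$. For any $g \in J_1 \subseteq \m$, the direct sum $\m = xR \oplus yR$ gives a unique expression $g = xr_1 + yr_2$; then $0 = xg = x^2 r_1 + xyr_2 = x^2 r_1$ (using $xy = 0$ in $R$) together with $y(xr_1) = xyr_1 = 0$ forces $xr_1 \in J_1 \cap J_2 = \soc R$ while $xr_1 \in xR$. Since $xR \cap yR = 0$, this yields
\[
J_1 = yR \oplus (\soc R \cap xR), \qquad J_2 = xR \oplus (\soc R \cap yR).
\]
Applying the same decomposition argument to $\soc R \subseteq \m$ gives $\soc R = (\soc R \cap xR) \oplus (\soc R \cap yR)$, and summing everything:
\[
\syz_2^R(k) = (xR \oplus yR) \oplus \soc R = \m \oplus k^{\dim_k \soc R},
\]
so $a = \dim_k \soc R$.

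It remains to compute $\dim_k \soc R$ in each case. A snake-lemma argument applied to $0 \to yR \to R \to R/yR \to 0$ multiplied by $x$ identifies $J_1/yR \cong (0:_{R/yR}\bar x)$: the connecting map vanishes because its image lies in $xR \cap yR = 0$. Hence $xR \cong R/J_1$ is the cyclic quotient of $R/yR$ by annihilation of $\bar x$; writing $R/yR = k[[x]]/(x^{m_1})$ for some $m_1 \geq 2$ (if the image of $I$ in $k[[x]] = S/(y)$ is nonzero) or $R/yR = k[[x]]$ (if it vanishes) gives $xR \cong k[[x]]/(x^{m_1-1})$ with one-dimensional socle in the first case, and $xR \cong k[[x]]$ with zero socle in the second. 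The analysis for $yR$ is symmetric. Now: (i) if $R$ is Cohen-Macaulay of dimension $1$, $\depth R = 1$ so $\soc R = 0$ and $a = 0$; (ii) if $\dim R = 1$ and $\depth R = 0$, one may assume $\sqrt I = (x)$, so that $R/xR \cong k[[y]]$ has trivial socle, placing $\soc R \subseteq xR$ and thus $\soc R = \soc(xR) \cong k$, giving $a = 1$; (iii) if $\dim R = 0$, both $R/xR$ and $R/yR$ are Artinian, so the socles of $xR$ and $yR$ are each $\cong k$, whence $\soc R \cong k^2$ and $a = 2$.

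The main obstacle is the snake-lemma identification $J_1/yR \cong (0:_{R/yR}\bar x)$ that drives the case analysis: this is precisely where the direct-sum decomposition $\m = xR \oplus yR$ from \autoref{mindec} is essential, since $xR \cap yR = 0$ is what kills the connecting map that would otherwise obstruct the identification and hence the explicit computation of $xR$ and its socle.
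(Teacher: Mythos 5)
Your argument is correct and takes a genuinely different route from the paper's. The paper, after invoking \autoref{mindec}, immediately uses the explicit form $I = (x^\alpha, xy, y^\beta)$ (or a degenerate case thereof) to identify $\overline{x}R$ and $\overline{y}R$ as $R/(\overline{x}^{\alpha-1},\overline{y})$ and $R/(\overline{x},\overline{y}^{\beta-1})$, and then writes down their syzygies by hand, observing that $\overline{x}^{\alpha-1}$ and $\overline{y}^{\beta-1}$ lie in the socle. Your proof instead extracts a clean structural intermediate statement --- the isomorphism $\syz_1^R(\m) \cong \m \oplus \soc(R)$, hence $a = \dim_k \soc(R)$ --- directly from the decomposition $\m = \overline{x}R \oplus \overline{y}R$ and a short annihilator computation ($x\overline{r}_1 \in J_1 \cap J_2 = \soc R$, etc.), deferring any appeal to the explicit form of $I$ to the final evaluation of $\dim_k \soc(R)$. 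This buys a more conceptual explanation of where the $k$-summands come from (they are precisely $\soc R$), which is slightly more robust than reading them off from particular generators of $I$. Two small things to tighten: the "unique expression $g = xr_1 + yr_2$" is an overstatement (only the components $xr_1 \in xR$, $yr_2 \in yR$ are unique, not $r_1, r_2$), though nothing downstream uses the stronger claim; and "one may assume $\sqrt{I} = (x)$" and "$R/xR \cong k[[y]]$" in case (ii) are true but tacitly rely on the same classification $I = (xy, x^\alpha)$ from \autoref{mindec} that the paper uses --- so the case analysis is not classification-free, only the formula $a = \dim_k\soc R$ is. Both that formula and the snake-lemma identification $xR \cong (R/yR)/(0:_{R/yR}\bar x)$ are sound.
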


It remains to treat the more general, and more difficult, case where $\m$
is indecomposable. We only need to give the direct sum decomposition into indecomposables of $\syz_{2}^{R}(k) = \syz_1^{R}(\m).$ 
 The following result combines \autoref{syz and dual0} and \autoref{Nindec}:

\begin{thm}\label{THM1.3bis} If $\m$ is indecomposable, then 
$$\syz_2^R(k)=\syz_1^R(\m) \cong \Hom(\m, R) \cong N \oplus k^{a} \, ,$$
where $a= \dim_{k}\left(\frac{\n(I:\n)}{\n I}\right)$ and $N$ is indecomposable.
\end{thm}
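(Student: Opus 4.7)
The plan is to prove the three claims in the theorem in order: the isomorphism $\syz_1^R(\m) \cong \Hom_R(\m,R)$, the decomposition $\Hom_R(\m,R) \cong N \oplus k^a$ with $a = \dim_k \n(I:\n)/\n I$, and the indecomposability of $N$. These correspond respectively to the combined content of \autoref{syz and dual0} and \autoref{Nindec}.

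For $\syz_1^R(\m) \cong \Hom_R(\m,R)$, write $\m = (x,y)R$ for $x, y$ a regular system of parameters of $S$. The minimal presentation is $R^{\mu+1} \xrightarrow{\Phi} R^2 \to \m \to 0$, where $\Phi$ has first column equal to the Koszul relation $(-y, x)^T$ and remaining columns $(\alpha_j, \beta_j)^T$ with $\alpha_j x + \beta_j y = f_j$ a minimal generator of $I$. The candidate isomorphism is built from the $R$-linear involution $\psi \colon R^2 \to R^2$, $(a,b) \mapsto (b, -a)$: to a syzygy $(a,b) \in \syz_1^R(\m) \subseteq R^2$ it assigns the homomorphism $\m \to R$, $x \mapsto b$, $y \mapsto -a$. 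Compatibility with the Koszul relation is automatic from $ax + by = 0$, while compatibility with each remaining relation $(\alpha_j, \beta_j)$ reduces to the skew-minor containment $\alpha_j b - \beta_j a \in I$. I would establish this containment from the indecomposability hypothesis on $\m$, using that by \autoref{mindec} no regular system of parameters satisfies $xy \in I$; the same argument in reverse shows $\psi$ restricts to a surjection onto $\Hom_R(\m,R)$.

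For the decomposition, apply the criterion stated in the introduction that the number of summands isomorphic to $k$ of a finitely generated $R$-module $M$ equals $\dim_k \soc(M)/(\soc(M) \cap \m M)$. An element of $\soc \Hom_R(\m, R)$ is a homomorphism $f$ with $f(\m) \subseteq \ann_R(\m) = (I:_S \n)/I$; writing $f(x) = \bar u$, $f(y) = \bar v$ with $u, v \in (I:\n)$, the Koszul constraint $-yu + xv \in I$ is automatic since $\n(I:\n) \subseteq I$, and the same containment shows the remaining relations impose no extra condition. A direct identification of the subgroup $\m \cdot \Hom_R(\m, R) \cap \soc$ with the pairs arising from $\n(I:\n)$ produces a quotient of $k$-dimension exactly $\dim_k \n(I:\n)/\n I$, which is $a$.

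The last and hardest step is indecomposability of $N$. The plan is to argue by contradiction: a nontrivial decomposition $N = N_1 \oplus N_2$ would combine with the $k^a$ splitting to produce a nontrivial decomposition of $\Hom_R(\m, R)$, which transports across the first isomorphism to a nontrivial decomposition of $\syz_1^R(\m)$. The mechanism for deriving a contradiction is that over our Golod ring of embedding dimension $2$, the first syzygy operation reflects direct-sum decompositions modulo $k$-summands, so the decomposition of $\syz_1^R(\m)$ forces one on $\m$ itself, contradicting the hypothesis. The main obstacle, and where the real work lies, is to make this reflection precise: in particular one must rule out that the hypothetical splitting of $N$ is spurious, meaning it cannot be a consequence of idempotents that merely permute the $k^a$ factors that were already stripped off. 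This will likely require a careful analysis of $\End_R(\Hom_R(\m, R))$ and of how its idempotents interact with the explicit $k^a$ summand, together with the fact that no element of $N$ is a socle element (by construction of $a$).
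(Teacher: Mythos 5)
Your proposal correctly identifies the two ingredients (\autoref{syz and dual0} and \autoref{Nindec}) and the overall shape, but in the two places where the real work happens you substitute a promissory note for an argument, and in both cases the note cannot be cashed with the tools you cite.

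\textbf{The isomorphism $\syz_1^R(\m)\cong\m^*$.} Your map $\psi$ is the right one, and the easy direction (every $\phi\in\m^*$ gives a syzygy $(\phi(y),-\phi(x))$) is automatic. The hard direction is exactly what you call the ``skew-minor containment'' $\alpha_j b-\beta_j a\in I$. What follows cheaply from $ax+by\in I$ and $\alpha_j x+\beta_j y\in I$ is only $\alpha_j b-\beta_j a\in I:\n$ (multiply the two relations cross-wise and subtract); getting it down into $I$ is the entire content of the paper's Section 2 and is proved via the exact sequence argument of \autoref{3 equi} combined with linkage and Gaeta's theorem in \autoref{syz and dual} (the condition $a\cdot\Fitt_2(I)\subset\n$). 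Your proposed route — ``establish this containment from the indecomposability hypothesis on $\m$, using \autoref{mindec}'' — misidentifies the relevant hypothesis: \autoref{syz and dual0} has nothing to do with indecomposability of $\m$; it holds whenever $R$ is not a zero-dimensional complete intersection, including when $\m$ is decomposable. There is no known way to get the containment from ``$xy\notin I$ for every regular system of parameters.''

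\textbf{Indecomposability of $N$.} Your plan rests on the assertion that over these rings ``the first syzygy operation reflects direct-sum decompositions modulo $k$-summands,'' i.e.\ that a nontrivial splitting of $\syz_1^R(\m)$ forces one of $\m$. This is not a theorem, the paper does not use it, and I see no reason to believe it here. The paper's proof of \autoref{Nindec}(b) goes in the opposite direction: it establishes the key identity $\m N\cong\m/(0:\m)$ (equation (\ref{importantEQ})), reduces to the cases where $\m/(0:\m)$ is itself decomposable (classified by \autoref{mindec}), proves $\mu(N)\le 2$ in those cases by a careful colon-ideal computation, and then uses \autoref{one quadric}(c) to show that any cyclic summand of $N$ must be $R/(0:\m)$ and the complement must be annihilated by $\m$ — hence $k$ or $0$. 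Nothing in that argument is a reflection principle, and your acknowledgment that the step ``will likely require a careful analysis of $\End_R(\Hom_R(\m,R))$'' is essentially a concession that the hard part is undone.

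Your middle step — computing $a$ via $\dim_k\soc(\m^*)/(\m\m^*\cap\soc(\m^*))$ and matching it to $\dim_k\n(I:\n)/\n I$ — is a legitimate alternative to the paper's route through the Koszul-homology map $\soc Z\to I/\n I$, though as written the identification of $\m\m^*\cap\soc(\m^*)$ is left vague and would need to be carried out (the paper's way of doing this, identifying $\m Z\cap\soc Z$ with $RL_0\cap\soc Z$, is where the real content is). But the two genuine gaps above mean this proposal does not establish the theorem.
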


We can make the decomposition of $\syz^{R}_{1}(\m)$ in  \autoref{THM1.3bis} very explicit. Let $x,y$ be minimal generators 
of $\n$ and $h_1, \ldots, h_n$ be minimal
generators of $I,$ write $h_i=f_ix+g_iy,$ and let $a$ be as in \autoref{THM1.3bis}.
 Choose generators of $I$ so that the images of the last $a$ generators $h_i$
form a $k$-basis of $\frac{\n(I:\n)}{\n I}$ and choose the corresponding $f_i$ and $g_i$ in $I:\n.$  
With this notation we will show that $\syz_1^R(\m)$ is the submodule of $R^2$ generated by the 
columns of the matrix
$$\begin{pmatrix}
\overline{y}&\overline{f_{1}}&\dots&\overline{f_{n}} \\
-\overline{x} &\overline{g_{1}}&\dots&\overline{g_{n}}
\end{pmatrix}  ,
$$
where $^{-}$ denotes images in $R.$
Now let $N$ and $N'$ be the submodules  of $R^2$ 
generated by the first 
$t+1-a$ columns and by the last $a$ columns, respectively. For these particular submodules $N$ and $N'$ we have:

\begin{cor}\label{THM1.3} If $\m$ is indecomposable, then 
$$ \syz_2^R(k)=N \oplus N' \, ,$$
where $N' \cong k^{a}$ and $N$ is indecomposable.

\end{cor}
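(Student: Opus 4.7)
The plan is to establish the direct sum decomposition $\syz_2^R(k) = N \oplus N'$ with $N' \cong k^{a}$ directly from the matrix presentation, and then derive indecomposability of $N$ by comparing with \autoref{THM1.3bis} via Krull--Schmidt cancellation.

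I would first verify that the columns of the displayed matrix generate $\syz_1^R(\m)$. Given $(\overline{a},\overline{b})$ in the kernel of $R^2 \to \m$, one has $ax + by \in I$, so $ax + by = \sum c_i h_i = x\sum c_i f_i + y\sum c_i g_i$ for some $c_i \in S$. Since $x,y$ is a regular sequence in $S$, the only syzygy of $(x,y)$ over $S$ is $(y,-x)$, so $(a - \sum c_i f_i,\; b - \sum c_i g_i) = s(y,-x)$ for some $s \in S$. Reducing modulo $I$ expresses $(\overline{a},\overline{b})$ as an $R$-linear combination of the columns, so $N + N' = \syz_2^R(k)$. For $N' \cong k^{a}$, note that $f_i, g_i \in I:\n$ for $i > n - a$ forces each column $(\overline{f_i},\overline{g_i})^{T}$ into $\soc(R)^2$, so $N'$ is a $k$-vector space. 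A nontrivial $k$-linear dependence would yield scalars $c_i \in k$, not all zero, with $\sum c_i f_i$ and $\sum c_i g_i$ both in $I$, hence $\sum c_i h_i = x \sum c_i f_i + y \sum c_i g_i \in \n I$, contradicting the assumption that the last $a$ images $\overline{h_i}$ form a $k$-basis of $\n(I:\n)/\n I$.

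The main obstacle is showing $N \cap N' = 0$. Writing an element in the intersection both as $\overline{s}(\overline{y},-\overline{x})^{T} + \sum_{i \le n-a} \overline{d_i}(\overline{f_i},\overline{g_i})^{T}$ and as $\sum_{i > n-a} c_i(\overline{f_i},\overline{g_i})^{T}$, I would lift the two coordinate equations to $S$, multiply the first by $x$ and the second by $y$, and add. The $s$-term cancels (both sides contribute $\pm sxy$), yielding
\[
\sum_{i \le n-a} d_i h_i - \sum_{i > n-a} \tilde c_i h_i \in \n I,
\]
where $\tilde c_i$ is a lift of $c_i$. Minimality of $h_1,\ldots,h_n$ means their images form a $k$-basis of $I/\n I$, so every coefficient must lie in $\n$; in particular $c_i = 0$ in $k$ for each $i > n-a$. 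Hence $\syz_2^R(k) = N \oplus N'$ with $N' \cong k^{a}$.

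It remains to show $N$ is indecomposable. \autoref{THM1.3bis} provides $\syz_2^R(k) \cong \tilde N \oplus k^{a}$ with $\tilde N$ indecomposable. Since the paper's setup is preserved and reflected by completion, I would pass to $\widehat{R}$, where Krull--Schmidt holds. Cancelling the common $k^{a}$ summand from $\widehat{N} \oplus k^{a} \cong \widehat{\tilde N} \oplus k^{a}$ yields $\widehat{N} \cong \widehat{\tilde N}$, which is indecomposable over $\widehat{R}$; this reflects to indecomposability of $N$ over $R$, completing the proof.
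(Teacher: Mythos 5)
Your proof is correct, but it follows a more computational route than the paper's.

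The paper's argument is shorter and more structural: since the last $a$ columns are socle elements of $\syz_1^R(\m)$ that are part of a minimal generating set, the general fact that a socle element which is a minimal generator generates a $k$-summand immediately exhibits $N'\cong k^a$ as a direct summand. Then, because the invariant $\dim_k\bigl(\soc(Z)/(\m Z\cap\soc Z)\bigr)=a$ counts \emph{all} $k$-summands of $Z=\syz_1^R(\m)$, the complement (equivalently the quotient $Z/N'$) has no $k$-summands, and \autoref{Nindec}(b) gives indecomposability directly. You instead verify the internal direct sum $Z=N\oplus N'$ by an explicit coordinate computation showing $N\cap N'=0$ --- the trick of multiplying the two coordinate relations by $x$ and $y$, adding so that the $sxy$ terms cancel, and then reading off minimality of $h_1,\dots,h_n$ in $I/\n I$ is correct and, in fact, makes explicit something the paper treats somewhat implicitly (the paper only shows $N'$ is \emph{a} summand and identifies the quotient, not that the specific $N$ is a complement). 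For indecomposability you appeal to \autoref{THM1.3bis} plus Krull--Schmidt cancellation after completion instead of the $k$-summand count. One small point of hygiene: rather than claiming that $\widehat{\tilde N}$ is indecomposable because $\tilde N$ is (a statement that does not hold for completion in general), you should apply \autoref{THM1.3bis} directly over $\widehat R$ to obtain an indecomposable $\widehat R$-module $\tilde N_{\widehat R}$ with $\syz_2^{\widehat R}(k)\cong \tilde N_{\widehat R}\oplus k^a$, and then cancel. (In the paper's running convention $R$ is already complete, so this is only a matter of phrasing.) Both routes reach the same conclusion; yours is more explicit where the paper is more slick.
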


We generalize the first isomorphism in \autoref{THM1.3bis} to second syzygies of some cyclic modules other than $k.$ For instance, if 
$R$ is Artinian and $J$ is an ideal so that the ring $R/J$ is a complete
intersection, then using linkage we show that, if ${\rm Fitt}_2(I) R \subset J$, then
$\syz_2^R(R/J) \cong J^*:= \Hom(J, R)$ 
 (see \autoref{syz and dual}).

A consequence of these results is that at most three non-isomorphic indecomposable modules appear in the
direct sum decompositions of all the syzygy modules of $k,$ and that these indecomposable 
modules are summands of $k,$ $\m,$ $\m^*$ (see  \autoref{main}). In the case $e=2$ we are also able to characterize when, for any given $i,$ the syzygy module $\syz_i^R(k)$
is indecomposable (see \autoref{decomp of syzygies}).

In  experiments with rings of embedding codimension $>2$ we have seen an analogous phenomenon:

\begin{conj}
 If $(R,\m,k)$ is a local Golod ring of embedding codimension $e$, then there is a set of at most $e+1$ indecomposable
 modules from which every $R$-syzygy of $k$ may be built as a direct sum.
\end{conj}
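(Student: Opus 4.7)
Let $s := \edim R$ denote the embedding dimension of $R$ and $e := s - \dim R$ the embedding codimension. The starting point is Theorem~\ref{generalGolod}: every syzygy $\syz^R_i(k)$ is a direct sum of copies of the first $s+1$ syzygies $\syz^R_0(k),\dots,\syz^R_s(k)$. Since we may pass to the completion of $R$ and apply Krull--Schmidt, the conjecture reduces to bounding the total number of pairwise non-isomorphic indecomposable summands appearing among these $s+1$ syzygies by $e+1$.

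The plan has two parts. The first is to trim the list of ``relevant'' syzygy modules using depth. When $R$ is Cohen--Macaulay of dimension $d$ (so $s=e+d$), the Koszul homology $h_m = \dim_k H_m(K_\bullet)$ vanishes for $m > e$, and substituting into Theorem~\ref{generalGolod} shows that the recurrence for $i>s$ uses only $j\in[i-e-1,\,i-2]$. Unrolling this backwards, every $\syz^R_i(k)$ with $i>s$ is built from the tail block $\syz^R_d(k),\dots,\syz^R_s(k)$, which contains exactly $e+1$ modules.

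The second, essentially harder, part is to pass from $e+1$ syzygy \emph{modules} to $e+1$ indecomposable modules, and moreover to absorb the contributions of the low syzygies $\syz^R_0(k),\dots,\syz^R_{d-1}(k)$ whose indecomposable summands must also lie in the final list. For $s=2$ this is achieved by Theorem~\ref{THM1.3bis} and Corollary~\ref{THM1.3}: the indecomposables are $k$, $\m$, and an indecomposable summand $N$ of $\Hom(\m,R)$. For general codimension one would attempt to exhibit a canonical list of $e+1$ indecomposables --- plausibly including $k$, selected $\syz^R_i(k)$ for small $i$, and $R$-duals such as $\Hom(\syz^R_i(k),R)$ --- together with explicit matrix and duality computations modeled on the proof of Theorem~\ref{THM1.3bis}.

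The non-Cohen--Macaulay case should be reduced to the CM setting by factoring out a regular sequence in $\m$ that is part of a minimal generating set, which preserves Golodness and embedding codimension; the decomposition data would then be transported back via a Shamash-style change-of-rings argument. The main obstacle throughout is the second part: Theorem~\ref{generalGolod} controls only the \emph{multiplicities} of syzygy modules and carries no indecomposable information, so producing the sharp $e+1$ bound requires genuinely new structural input --- a higher-codimension analogue of the explicit duality and summand description in Theorem~\ref{THM1.3bis}, together with an argument unifying the indecomposable summands of the low and tail syzygies.
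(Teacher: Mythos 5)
This statement is a \emph{Conjecture} in the paper, not a theorem, and the authors explicitly present it as an open problem suggested by computer experiments; the paper does not contain a proof of it. What you have written is not a proof either, and to your credit you acknowledge this candidly in the last paragraph. So the honest verdict is that there is nothing here to compare against: the paper offers no argument, and your ``proof'' is a research plan, not a proof.

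That said, the plan itself is reasonable and partially correct, so let me comment on it. The first part is a genuine observation: if $R$ is Cohen--Macaulay of dimension $d$ and embedding dimension $s=e+d$, then $h_m=\dim_k H_m(K_\bullet)=\beta^S_m(R)$ vanishes for $m>e$ by Auslander--Buchsbaum, so the recurrence of \autoref{generalGolod} for $i>s$ only references $\syz^R_j(k)$ with $j\in[i-e-1,\,i-2]$, and an induction shows every high syzygy is built from the tail block $\syz^R_d(k),\dots,\syz^R_s(k)$ of $e+1$ syzygy modules. This reduction is correct and not written down in the paper. However, this is an upper bound by $e+1$ syzygy \emph{modules}, not $e+1$ \emph{indecomposables}; as you note, \autoref{generalGolod} gives multiplicities only and carries no Krull--Schmidt information about the summand structure of those $e+1$ modules. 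Controlling the indecomposables is exactly the content of Sections~\ref{sec2}--\ref{sec4} for $s=2$, and the paper has no analogue for $s>2$. Your suggestion to look at $R$-duals $\Hom(\syz^R_i(k),R)$ is a sensible guess inspired by \autoref{syz and dual0}, but it is only a guess. Likewise, the proposed reduction to the CM case by factoring out a regular sequence and ``transporting the decomposition data back via a Shamash-style change-of-rings argument'' hides a nontrivial step: the regular sequence changes the syzygy modules, not just the Betti numbers, and making that transport precise at the level of direct-sum decompositions is itself an open problem. Finally, as a sanity check on the statement: even for $\edim R=2$, $\dim R=1$, $R$ Cohen--Macaulay and $\m$ decomposable, the modules $k,\ R/(\overline{x}),\ R/(\overline{y})$ all occur among the $\syz^R_i(k)$, so the bound $e+1=2$ can only be correct if ``syzygy'' is read as $\syz^R_i(k)$ for $i\ge 1$; your plan should track which convention is being used.
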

 
\vspace{.2cm}

Our results were suggested by Macaulay 2 computations \cite{M2}, performed at an AIM meeting in September 2023 with the help of
Mahrud Sayrafi and Devlin Mallory, using their \emph{DirectSummands} package. Without this support we might never have guessed
that the results of this paper could be true.

\section{$\syz^{R}_{1} (\m)$ is $\m^{*}$}\label{sec2}

\begin{thm} \label{syz and dual0}
 If $(R,\m)$ is a Noetherian local ring of embedding dimension 2 that is not a zero-dimensional complete intersection, then $\syz_{1}^{R}(\m) \cong \m^{*}.$
\end{thm}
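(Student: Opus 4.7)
My plan is to construct an explicit $R$-linear map $\alpha\colon \m^{*} \to \syz_{1}^{R}(\m)$ by sending $\phi$ to $(\phi(\overline{y}), -\phi(\overline{x})) \in R^{2}$, and to prove it is an isomorphism. The $R$-linearity identity $\overline{x}\,\phi(\overline{y}) = \phi(\overline{x}\,\overline{y}) = \overline{y}\,\phi(\overline{x})$ ensures $\alpha(\phi) \in \ker(R^{2} \to \m) = \syz_{1}^{R}(\m)$, and $\alpha$ is clearly injective since $\overline{x}, \overline{y}$ generate $\m$. The content is surjectivity.

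For this I would use the explicit $R$-presentation $R^{n+1} \xrightarrow{\Phi} R^{2} \to \m \to 0$ indicated in the introduction, whose columns are the Koszul syzygy $(\overline{y}, -\overline{x})^{t}$ and the ``ideal'' syzygies $(\overline{f_{i}}, \overline{g_{i}})^{t}$ coming from a minimal generating set $h_{i} = f_{i} x + g_{i} y$ of $I$; the identification $\syz_{1}^{R}(\m) = \im \Phi$ uses only the regular-sequence property of $x, y$ in $S$. Given $(a, b) \in \syz_{1}^{R}(\m)$, the candidate preimage $\phi$ satisfies $\phi(\overline{x}) = -b$ and $\phi(\overline{y}) = a$, and is a well-defined map on $\m = \n/I$ precisely when $\phi(h_{i}) = \overline{g_{i}}\,a - \overline{f_{i}}\,b = 0$ in $R$ for every $i$. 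The direct identities $(g_{i} a - f_{i} b) x = g_{i}(ax + by) - b h_{i} \in I$ and $(g_{i} a - f_{i} b) y = a h_{i} - f_{i}(ax + by) \in I$ already show that $\overline{g_{i}}\,a - \overline{f_{i}}\,b$ belongs to $(I:\n)/I = \soc R$; the remaining step is to show this socle obstruction actually vanishes.

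The main obstacle is precisely this vanishing. The cleanest route I envision is to fit both modules into short exact sequences
\[
0 \to R/\soc R \to \m^{*} \to \Ext^{1}_{R}(k, R) \to 0, \qquad 0 \to R/\soc R \to \syz_{1}^{R}(\m) \to H_{1}(K_{\bullet}) \to 0,
\]
where $K_{\bullet} = K^{R}(\overline{x}, \overline{y})$: the first arises by applying $\Hom(-, R)$ to $0 \to \m \to R \to k \to 0$, and the second by splitting off the image of the Koszul differential $d_{2}$, namely $R \cdot (\overline{y}, -\overline{x}) \cong R/\soc R$ inside $\syz_{1}^{R}(\m) = \ker d_{1}$. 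The map $\alpha$ restricts to the natural identification on the common subobject $R/\soc R$, so by the five-lemma the surjectivity of $\alpha$ reduces to showing that the induced map $\Ext^{1}_{R}(k, R) \to H_{1}(K_{\bullet})$ is an isomorphism. Establishing this identification is where the hypothesis that $R$ is not a zero-dimensional complete intersection enters decisively: it forces the first Bass number of $R$ to coincide with the first Koszul homology, reflecting the Golod nature of $R$ (where a zero-dimensional complete intersection would instead have the Koszul complex itself as a resolution of $k$). I expect this identification to be the main technical input of the proof.
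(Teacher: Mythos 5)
Your map $\alpha\colon \m^{*}\to\syz_{1}^{R}(\m)$, $\phi\mapsto(\phi(\overline y),-\phi(\overline x))$, is exactly the map the paper uses: it is the composite of $(\n R)^{*}\hookrightarrow \Hom_S(\n,R)$ with the isomorphism $\Hom_S(\n,R)\cong\syz_1^R(\n R)$ of \autoref{relation of dual to syz}, and your direct computation that $\overline{g_i}\,a-\overline{f_i}\,b$ lands in the socle correctly isolates the obstruction to surjectivity. Your five-lemma reduction is also sound: both short exact sequences are correct, $\alpha$ does restrict to the identity on the common subobject $R/\soc R$ (for $\phi$ equal to multiplication by $r$ one has $\alpha(\phi)=r(\overline y,-\overline x)$), and a snake-lemma chase gives $\ker\alpha\cong\ker\bigl(\Ext^1_R(k,R)\to H_1(K_\bullet)\bigr)$ and $\cok\alpha\cong\cok\bigl(\Ext^1_R(k,R)\to H_1(K_\bullet)\bigr)$.

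The gap is at the decisive point you flag and then do not fill. Since $\alpha$ is injective, the induced map $\Ext^1_R(k,R)\to H_1(K_\bullet)$ is automatically injective, and surjectivity of $\alpha$ is exactly the numerical statement $\dim_k\Ext^1_R(k,R)=\mu(I)$, i.e.\ that the first Bass number of $R$ equals $h_1$. You assert this ``reflects the Golod nature of $R$'' but offer no proof, and it is not an elementary fact: the standard route is the Levin--Lescot computation of Poincar\'e series of modules of finite $S$-projective dimension over a Golod ring (equivalently, Lescot's formula for the Bass series), applied to $\omega_R$. That input is of roughly the same depth as the theorem itself, so the proposal as written does not close what it identifies as ``the main technical input.'' (Your parenthetical about zero-dimensional complete intersections is also off: the Koszul complex on $\overline x,\overline y$ is \emph{not} a resolution of $k$ there; what actually fails is that $R$ is then self-injective, so $\Ext^1_R(k,R)=0$ while $H_1(K_\bullet)\neq 0$.)

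By contrast, the paper never touches Bass numbers or Golod resolutions in this step. It interposes $\Hom_S(\n,R)$ between $\m^{*}$ and $\syz_1^R(\m)$, reduces the vanishing of your socle obstruction to the Fitting-ideal condition $a\cdot\Fitt_2(I)\subset\n$ (\autoref{syz and dual}), and verifies that condition via Gaeta's theorem on links $(h_i',ah_j'):J$ and the symmetry of linkage for the grade-two perfect ideal $I'$. If you want to complete a proof along your lines, you must either supply the Bass-number identity independently, or abandon the cohomological detour and show directly that $\overline{g_i}\,a-\overline{f_i}\,b=0$ -- which is precisely what the linkage computation in \autoref{syz and dual} accomplishes.
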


We postpone the proof until after \autoref{syz and dual}.
 \def\Fitt{{\rm Fitt}}

 \begin{thm}\label{3 equi}
Let $S$ be a ring and let  $I\subset J$ be ideals of $S$. Set $R = S/I$ and write $(-)^{*} = \Hom_{S}(-, R)$ for the $R$-dual.
The following conditions are equivalent$\, :$
\begin{enumerate}[label={$($\arabic*\,$)$}]
 \item The dual 
 $
(JR)^{*} \to J^{*}
$
of the natural surjection $J \to JR$ is an isomorphism.

 \item The restriction map $J^{*}\to I^{*}$ is 0.
 \item The natural map $\Ext^{1}_{R}(S/J,R) \to \Ext^{1}_{S}(S/J,R) $ is an isomorphism.
\end{enumerate}
If these conditions are satisfied for $J$ then they are satisfied for any ideal containing $J.$
\end{thm}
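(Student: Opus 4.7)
The plan is to derive both equivalences from the long exact sequences obtained by applying $\Hom_S(-,R)$ to the two short exact sequences $0 \to I \to J \to J/I \to 0$ and $0 \to J \to S \to S/J \to 0$. For (1) $\Leftrightarrow$ (2), I dualize the first sequence: since $J/I \cong JR$ is annihilated by $I$, one has $\Hom_S(J/I,R) = (JR)^*$, and the long exact sequence begins
$$0 \to (JR)^* \to J^* \to I^* \to \Ext^1_S(J/I,R) \to \cdots .$$
The first map is the one from (1); it is always injective, and it is surjective precisely when the following map $J^* \to I^*$ vanishes, i.e., precisely when (2) holds.

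For (1) $\Leftrightarrow$ (3), I apply $\Hom_R(-,R)$ to $0 \to JR \to R \to S/J \to 0$ and $\Hom_S(-,R)$ to $0 \to J \to S \to S/J \to 0$. Using $\Hom_S(S/J,R) = \Hom_R(S/J,R)$ (since $S/J$ is an $R$-module) together with $\Ext^1_R(R,R) = 0 = \Ext^1_S(S,R)$, I obtain two four-term exact sequences
\begin{gather*}
0 \to \Hom_R(S/J,R) \to R \to (JR)^* \to \Ext^1_R(S/J,R) \to 0, \\
0 \to \Hom_R(S/J,R) \to R \to J^* \to \Ext^1_S(S/J,R) \to 0.
\end{gather*}
The maps $J \twoheadrightarrow JR$, $S \twoheadrightarrow R$, and $\id_{S/J}$ induce a morphism of the first sequence into the second that is the identity on the first two terms, the map of (1) on the third, and the natural change-of-rings map of (3) on the fourth. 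Splitting each four-term sequence at the image of $R$ and applying the snake lemma to the resulting pair of short exact sequences yields isomorphisms between the kernels and the cokernels of the maps in (1) and (3); since both maps are already known to be injective, they are isomorphisms together, giving (1) $\Leftrightarrow$ (3).

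For the final assertion, (2) is trivially inherited by larger ideals: if $J \subset J'$ and $\phi \in (J')^*$, then $\phi|_J \in J^*$ vanishes on $I$ by hypothesis, and since $I \subset J$, this forces $\phi|_I = 0$, so $(J')^* \to I^*$ is zero. The main obstacle in this plan is purely bookkeeping: verifying that the rightmost vertical map between the two four-term sequences above agrees with the natural change-of-rings comparison $\Ext^1_R(S/J,R) \to \Ext^1_S(S/J,R)$ of (3). This is most cleanly confirmed via the Yoneda-extension description of $\Ext^1$, after which the snake lemma finishes the argument with no further effort.
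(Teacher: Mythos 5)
Your argument is correct and follows the same route as the paper: dualize $0 \to I \to J \to JR \to 0$ for the equivalence of (1) and (2), then compare the duals of $0 \to JR \to R \to S/J \to 0$ and $0 \to J \to S \to S/J \to 0$ for the equivalence of (1) and (3); the paper closes the diagram chase with the five lemma where you use the snake lemma on the split short exact sequences, but these are interchangeable. The remark about verifying that the rightmost vertical map is the change-of-rings comparison is a reasonable care point, and the final inheritance claim via condition (2) matches the paper exactly.
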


\begin{ex}\label{easy 3 equi} The conditions (1)--(3) of Theorem~\ref{3 equi} are satisfied if the $R$-ideal
$J/I = JR$ contains a nonzerodivisor. 
The natural exact sequence of $R$-modules
$$ 0\to I/IJ \to J \otimes_S R \to JR \to 0
$$ gives an exact sequence
$$
0\to (JR)^{*} \to J^{*} \to \Hom(I/IJ, R).
$$
But $\Hom(I/IJ,R) = 0$ since the $R$-module $I/IJ$ is annihilated by $JR$.
Thus the map $(JR)^{*} \to J^{*}$ is
an isomorphism as in condition (1) of Theorem~\ref{3 equi}.
\end{ex}
 
\begin{proof}[Proof of \autoref{3 equi}]
\noindent (1)$ \Longleftrightarrow$ (2): Dualizing the exact sequence
$$
0\to I\to J\to (J/I = JR) \to 0
$$
yields the result.

 \noindent (1) $\Longleftrightarrow$ (3): We have a diagram 
 \begin{tiny}
 $$
\begin{diagram}
0&\rTo &JR &\rTo &R& \rTo &R/JR&\rTo& 0 \\
&&\uTo&&\uTo&&\uTo^{\cong}\\
0&\rTo &J &\rTo &S& \rTo &S/J&\rTo& 0 
\end{diagram}.
$$
\end{tiny}
\vskip .2cm

\noindent
Dualizing into $R,$ we get the diagram
\begin{tiny}
$$
\begin{diagram}
0&\lTo &\Ext_{R}^{1}(R/JR, R) &\lTo &(JR)^{*} &\lTo &R& \lTo &(R/JR)^{*}&\lTo& 0 \\
&&\dTo&&\dTo&&\dTo^{=}&&\dTo^{\cong}\\
0&\lTo &\Ext_{S}^{1}(S/J, R) &\lTo &J^{*} &\lTo &R& \lTo &(S/J)^{*}&\lTo& 0 
\end{diagram}.
$$
\end{tiny}

\vskip .2cm

\noindent
The equivalence now follows from the ``five lemma''.

The last statement follows at once from condition (2).
 \end{proof}

\begin{prop}\label{relation of dual to syz} With notation as in \autoref{3 equi}, if $J$ is generated by an $S$-regular sequence $x,y,$ 
then $ J^{*}\cong \syz_{1}^{R} (JR)$ via the map $f \mapsto (f(y), -f(x)).$ In particular, if $J^{*}= (JR)^{*}$ then $f$ indices an isomorphism
$$
\syz_{1}^{R} (JR) \cong (JR)^{*}.
$$
\end{prop}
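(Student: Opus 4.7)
The plan is to identify both sides with essentially the same explicit submodule of $R^{2}$, so that the prescribed map $f \mapsto (f(y), -f(x))$ realizes the identification up to a swap and a sign of the two coordinates.

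First, I would invoke the truncated Koszul resolution of $J$ over $S$. Since $x, y$ is an $S$-regular sequence generating $J$, we have the exact sequence
$$0 \to S \xrightarrow{\binom{y}{-x}} S^{2} \xrightarrow{(x\ y)} J \to 0,$$
so the first syzygy of $J$ is free of rank one, generated by the Koszul relation. Applying $\Hom_{S}(-,R)$ identifies $J^{*}$ with the kernel of the map $R^{2} \to R$ sending $(u, v)$ to $yu - xv$, the embedding being $f \leftrightarrow (f(x), f(y))$. Thus
$$J^{*} \cong \{(u, v) \in R^{2} : yu = xv\}.$$

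Next, from the obvious presentation $R^{2} \twoheadrightarrow JR$ sending $e_{1}, e_{2}$ to $\bar x, \bar y$, I would read off
$$\syz_{1}^{R}(JR) = \{(a, b) \in R^{2} : xa + yb = 0\}.$$
Under the identification above, the map $f \mapsto (f(y), -f(x))$ becomes the $R$-linear automorphism $(u, v) \mapsto (v, -u)$ of $R^{2}$. Substituting $a = v$ and $b = -u$ converts $xa + yb = 0$ into $xv - yu = 0$, which is precisely the defining relation of $J^{*}$. Hence the prescribed map is a well-defined isomorphism $J^{*} \xrightarrow{\sim} \syz_{1}^{R}(JR)$.

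For the ``in particular'' clause, the hypothesis $J^{*} = (JR)^{*}$ is condition (1) of \autoref{3 equi}, i.e., the natural restriction $(JR)^{*} \to J^{*}$ is an isomorphism. Composing this with the isomorphism above produces $(JR)^{*} \cong \syz_{1}^{R}(JR)$, still implemented by the same formula (with $x, y$ now interpreted as their images in $JR$).

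There is no serious obstacle: the regularity of $x, y$ does the real work by making the first syzygy module of $J$ free of rank one, and the rest is bookkeeping. The only point requiring care is matching the sign convention of the Koszul differential $\binom{y}{-x}$ with the sign in the prescribed formula $f \mapsto (f(y), -f(x))$, so that the two defining relations on submodules of $R^{2}$ agree after the coordinate swap.
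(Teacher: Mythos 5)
Your proof is correct and takes essentially the same route as the paper's: both dualize the truncated Koszul presentation $0 \to S \xrightarrow{\binom{y}{-x}} S^{2} \to J \to 0$ and compare the resulting description of $J^{*}$ inside $R^{2}$ with the Koszul-type description of $\syz_{1}^{R}(JR)$; you merely spell out the identification of $J^{*}$ as a kernel a bit more explicitly, while the paper phrases the same computation as ``the composition is zero iff $(-b,a)$ factors through $J$.''
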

\begin{proof}
We may write $\syz_{1}^{R}(JR) = \{(\overline a,\overline b) \mid ax+by \in I\}$ where $\overline{\phantom a }$ denotes images in $R.$
Consider the maps
$$
 S \rTo^{
\begin{pmatrix}
 y\\
 -x
\end{pmatrix}
}
S^{2} \rTo^{
\begin{pmatrix}
 -b&a
\end{pmatrix}
}
 R.
$$
The composition is 0 if and only if $ax+by \in I,$ and since $J$ is the cokernel of $\begin{pmatrix}
 y\\
 -x
\end{pmatrix}
,$ this is the condition that $(-b, a)$ induces a homomorphism $J \to R.$
\end{proof}

\begin{thm}\label{syz and dual}
 Let $S$ be a Noetherian local ring and let $x,y$ be an $S$-regular sequence. 
 Let $I\subset S$ be an ideal of projective dimension one, so that we may write $I = aI'$, where $I'$ is perfect of grade 2 and $a$ is a nonzerodivisor. 
 
 If
  $I'\subset J := (x,y)$ then
conditions $ (1)-(3)$ of \autoref{3 equi}
are equivalent to the condition that $a\cdot \Fitt_{2}(I) \subset J.$ 
\end{thm}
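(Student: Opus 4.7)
The plan is to route through condition (2) of \autoref{3 equi} --- that the restriction $J^*\to I^*$ vanishes --- making it explicit via the Hilbert-Burch structure of $I$ and then matching it up with the Fitting-ideal condition.

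First, I set up the Hilbert-Burch data. Write $I=aI'$ with $I'=I_{n-1}(\phi)$ for an $n\times(n-1)$ matrix $\phi$, and denote $g_i=(-1)^{i+1}\det\phi_{(i)}$ the generators of $I'$, so that $ag_1,\ldots,ag_n$ generate $I$. Since $I'\subset J=(x,y)$, write $g_i=\alpha_ix+\beta_iy$ with $\alpha_i,\beta_i\in S$, form the $2\times n$ matrix $M$ with columns $(\alpha_i,\beta_i)^T$, and let $m_{ij}=\alpha_i\beta_j-\alpha_j\beta_i$ be its $2\times 2$ minors. Because $[x\ y]M\phi=g\phi=0$ and $x,y$ is a regular sequence, one obtains a factorization $M\phi=\binom{y}{-x}\psi$ for some row $\psi\in S^{n-1}$, which after a short calculation yields the structural identity $\sum_{l}\phi_{lk}m_{lj}=\psi_kg_j$ for all $j,k$. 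Next, \autoref{relation of dual to syz} gives $J^*\cong\syz_1^R(JR)$, and this syzygy module is generated as an $R$-module by the Koszul syzygy $(\bar y,-\bar x)$ together with the $n$ further syzygies $(\overline{a\alpha_i},\overline{a\beta_i})$ coming from the relations $ag_i\in I$. The Koszul syzygy corresponds to the inclusion $J\hookrightarrow R$, which restricts to zero on $I$; the remaining one for index $i$ corresponds to $\xi_i\colon J\to R$ with $\xi_i(x)=-a\beta_i$ and $\xi_i(y)=a\alpha_i$, whose restriction satisfies $\xi_i(ag_j)=a^2m_{ij}$. Condition (2) is therefore equivalent to $a^2m_{ij}\in I$ for all $i,j$, and --- because $a$ is a nonzerodivisor on $S$ --- to $am_{ij}\in I'$ for all $i,j$. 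The theorem thus reduces to showing
\[
am_{ij}\in I'\ \text{for all}\ i,j \;\iff\; a\cdot I_{n-2}(\phi)\subset J.
\]

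For the forward direction ($\Leftarrow$) I plan to combine the structural identity $\sum_l\phi_{lk}m_{lj}=\psi_kg_j$ with the unconditional relations $xm_{ij}=\beta_jg_i-\beta_ig_j$ and $ym_{ij}=\alpha_ig_j-\alpha_jg_i$ (so $Jm_{ij}\subset I'$ always) and the Laplace expansion of $\det\phi_{(i)}$ along a chosen column, which writes $g_i=(-1)^{i+1}a\det\phi_{(i)}$ as an $S$-linear combination of $a\cdot(n-2)$-minors of $\phi$ times entries $\phi_{lk}$. Using these to rewrite $am_{ij}$ modulo $I'$ produces a sum of terms of the form $(a\mu)\phi_{lk}$ with $\mu$ an $(n-2)$-minor; under $a\cdot I_{n-2}(\phi)\subset J$ each such term lies in $J\cdot(\phi_{lk})$, and the syzygy relation $\sum_ig_i\phi_{ik}=0$ together with $I'\subset J$ absorbs the combination into $I'$. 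For the converse ($\Rightarrow$) I would interpret everything via Koszul homology: $H_1(x,y;S/I')\cong(S/J)^n/\im\bar\phi$ is generated by $[(\bar\alpha_i,\bar\beta_i)]$, the graded product on the Koszul algebra sends $[(\bar\alpha_i,\bar\beta_i)]\wedge[(\bar\alpha_j,\bar\beta_j)]$ to $\bar m_{ij}$ in $H_2=(0:_{S/I'}J)$, and the second Fitting ideal of $H_1$ over $S/J$ is the image of $I_{n-2}(\phi)$ modulo $J$. A failure of $\bar a$ to annihilate this Fitting ideal then exhibits, after localizing at an associated prime of $S/J$ where $\bar\phi$ admits a Smith normal form, some pair $i,j$ with $\bar a\cdot\bar m_{ij}\ne0$, i.e.\ $am_{ij}\notin I'$.

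The principal technical obstacle is making this last step precise. The forward direction is essentially combinatorial but demands careful sign bookkeeping across the Laplace expansions; the converse is more delicate and, barring the Koszul-homology detour above, can alternatively be handled by a prime-avoidance argument combined with \autoref{easy 3 equi}, which renders the equivalence trivial wherever $JR$ contains a nonzerodivisor on $R$, leaving only the embedded-prime locus of $R$ in $V(J)$ to analyze directly.
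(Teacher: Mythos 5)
Your setup and the first half of the argument track the paper's proof quite closely: both route through condition~(2) of \autoref{3 equi}, use the Hilbert--Burch presentation of $I'$, write $g_i=\alpha_i x+\beta_i y$, and observe that the restriction $\rho\colon J^*\to I^*$ vanishes if and only if the products $a\,m_{ij}$ (your $m_{ij}$ are exactly the paper's $\Delta'_{i,j}=I_2(\varphi')$) lie in $I'$. Your computation $\xi_i(ag_j)=a^2m_{ij}$ and the cancellation of one factor of $a$ are correct. So the reduction to the statement
\[
a\,m_{ij}\in I'\ \text{for all}\ i,j\quad\Longleftrightarrow\quad a\cdot I_{n-2}(\phi)\subset J
\]
matches the paper exactly.

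The gap is in how you propose to prove this last equivalence. The paper handles it cleanly by linkage: a theorem of Gaeta identifies $(h'_i,ah'_j):J=(h'_i,ah'_j,a\Delta'_{i,j})$, so $a\Delta'_{i,j}\in I'$ is equivalent to $(h'_i,ah'_j):J\subset I'$; the symmetry of linkage (both $I'$ and $J$ are perfect of grade~$2$, linked to each other through the regular sequence $h'_i,ah'_j$) converts this to $(h'_i,ah'_j):I'\subset J$; and Gaeta again computes that link in terms of $a$ times the $(n-2)$-minors of $\phi$ with rows $i,j$ deleted, giving the Fitting-ideal condition. This two-step use of linkage is the heart of the proof and is entirely missing from your plan. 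Your proposed substitutes do not close the gap. In the $\Leftarrow$ direction, the Laplace-expansion rewriting is only sketched; each $g_i$ is an $(n-1)$-minor, hence a degree-$(n-1)$ polynomial in the entries of $\phi$, and the claimed absorption of $(a\mu)\phi_{lk}$ terms into $I'$ via the syzygy relation is not a routine cancellation --- as stated it is not a proof. In the $\Rightarrow$ direction, the Smith-normal-form step is simply unavailable: $(S/J)_{\mathfrak p}$ at an associated (or any) prime is not in general a principal ideal ring, so you cannot diagonalize $\bar\phi$ there. The alternative prime-avoidance route also does not work as described: \autoref{easy 3 equi} shows conditions (1)--(3) hold automatically when $JR$ contains a nonzerodivisor, but the theorem needs the companion fact that $a\cdot\Fitt_2(I)\subset J$ also holds there, which is precisely what is at stake and is not rendered ``trivial'' by that example. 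In short, the reduction is right but the engine that drives the proof --- Gaeta's description of links of grade-two perfect ideals and the symmetry of linkage --- needs to be supplied.
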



\begin{proof} Write $R=S/I$ and $(-)^{*} =\Hom_{S}(-, R).$  We denote images in $R$ by $^{-}.$

We will show that the restriction map $\rho: J^{*} \to I^{*}$ is 0 
if and only if $a\cdot \Fitt_{2}(I) \subset J.$

Let $h'_1, \ldots, h'_n$ be generators of $I'$ so that the elements $h_{i} = ah'_{i}$ generate $I$. 
Extending the ground field if necessary, we may assume that $h_i', h_j'$ form a regular sequence for every $i \neq j$ 
and that $h_i',a$ form a regular sequence for every $i.$ Let 
$$\varphi'=
\begin{pmatrix}
f'_{1}&\dots&f'_{n} \\
g'_{1}&\dots&g'_{n}
\end{pmatrix}
$$
be a matrix with entries in $S$ satisfying $\begin{pmatrix} h'_1 & \dots & h'_n\end{pmatrix}=\begin{pmatrix} x & y\end{pmatrix} \cdot \varphi'$, and
let $\varphi = a\varphi'$.  

We first prove that $\rho=0$ if and only if $aI_2(\varphi')\subset I'.$ Consider presentations of $J$ and $I$ with respect to the generators $x,y$ and $h_1, \ldots, h_n,$ respectively, and a morphism between them, 
\begin{tiny}
$$
\begin{diagram}
S&\rTo^{\begin{pmatrix}y\\-x\end{pmatrix}} & S^2& \rTo & J & \rTo & 0\\
\uTo&&\uTo^{\varphi}&&\uTo&&\\
S^{n-1}&\rTo &S^n&\rTo& I &\rTo& 0\, .\\
\end{diagram}
$$
\end{tiny}
\vskip .2cm
\noindent
Dualizing into $R$ we obtain a commutative diagram with exact rows 
\begin{tiny}
$$
\begin{diagram}
&&R^{n+1}\\
&&&\rdTo^{\overline \psi}\\
0&\rTo &J^{*}&\rTo& {S^2}^{*} ={R^2}^*&\rTo^{(\overline{y}, -\overline{x})}& S^{*}=R^*\\
&&\dTo^{\rho}&&\dTo^{\varphi^{*}=\overline{\varphi}^{*}}&&\dTo\\
0&\rTo &I^{*}&\rTo& {S^n}^{*} ={R^n}^*&\rTo& {S^{n-1}}^{*}={R^{n-1}}^*\, .\\
\end{diagram}
$$
\end{tiny}
\vskip .2cm

\noindent
Thus $\rho=0$ if and only if $\overline{\varphi}^*$ is zero when restricted to the image of $J^*.$ This image is the syzygy module of $\overline{y}, -\overline{x}$ in ${R^2}^*,$ which in turn is generated by the columns of the matrix $\overline{\psi},$ where
$$
\psi=\begin{pmatrix}
x &g_{1}&\dots&g_{n} \\
y & -f_{1}&\dots&-f_{n}
\end{pmatrix}\, .
$$
Therefore $\rho=0$ if and only if $\overline{\varphi}^{*}\overline{\psi}=0.$ 

%
Since 
\begin{small}
 $$
\varphi^t \psi = 
\begin{pmatrix}
 h_{1}& 0 & \Delta_{1,2} &\dots& \Delta_{1,n}\\
 h_{2}& -\Delta_{1,2}& 0 & \dots& \Delta_{2,n}\\
 \vdots &\vdots &&\ddots &\vdots\\
 h_n& -\Delta_{1,n} & -\Delta_{2,n} & \dots & 0
\end{pmatrix}
$$
\end{small}
where $\Delta_{i,j}$ is the determinant of the submatrix of $\varphi$ involving columns $i,j,$ we see that $\rho=0$ if and only if $I_2(\varphi)\subset I,$
 and this is the case if and only if $aI_{2}(\varphi') \subset I'$, as claimed.

Let $\Delta_{i,j}'$ be the minor of $\varphi'$ involving columns $i,j. $ 
Since $\begin{pmatrix} h_i' &  ah_j'\end{pmatrix}=\begin{pmatrix} x & y\end{pmatrix} \cdot \begin{pmatrix} f_i'& af_j' \\g_i' & ag_j'\end{pmatrix}$ and $h_i', ah_j'$ form a regular sequence, a theorem of Gaeta (see for instance \cite[Example 3.2(b)]{AN}) gives
$$
(h_{i}', ah_{j}'):J = ( h_{i}', ah_{j}', a\Delta_{i,j}')\, .
$$
As $I'$ is perfect of grade 2, it follows that $a \cdot \Delta_{i,j}'\in I'$ if and only if $(h_{i}', a h_{j}'):I' \subset J$ by the symmetry of linkage.

By the same theorem of Gaeta, the link  $(h_{i}', a h_{j}'):I' $ of $I'$ is generated by $h_i'$ and $a$ times the $n-2$ minors of the presentation matrix of $I' \cong I$ with
rows $i$ and $j$ deleted.  This proves  that $a \cdot I_2(\varphi')\subset I'$ if and only if  $a \cdot \Fitt_{2}(I) \subset J.$
\end{proof}

\begin{proof}[Proof of \autoref{syz and dual0}] We apply the previous results with $J=\n.$
 If $I$ is principal, we use Example~\ref{easy 3 equi} and \autoref{relation of dual to syz}. If $I$ is not principal, we may write $I = aI'$ with $I'$ perfect of grade 2. We see from \autoref{relation of dual to syz} and \autoref{syz and dual} that the result holds unless both ($a$) and $\Fitt_2(I)$ are unit ideals. If $a$ is a unit, then $I= I'$ has grade 2. If in addition $\Fitt_2(I)=S,$ then $I$ is a zero dimensional complete intersection. \end{proof}

\vspace{.001cm}

\section{The decomposition of $\m$}\label{sec3}

In this section $(S,\n,k)$ denotes a regular local ring of dimension 2 and  $I\subset S$ is an ideal. Write $R= S/I$ and $\m = \n R.$

\begin{lem}\label{class-ideals} Suppose that $\n = (x,y).$ If $xy\in I$ then $I$
can either
 be written as
\begin{enumerate}[label=$($\alph*\,$)$]
 \item $I = (xy, ux^{\alpha}+vy^{\beta})$ where $u,v$ are each either 0 or units and $\alpha,\beta$ are non-negative integers;  or as
 \item $(xy, x^{\alpha}, y^{\beta}),$ where $\alpha$ and $ \beta$ are positive.
\end{enumerate}

\end{lem}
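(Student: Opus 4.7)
The plan is to describe $I$ by studying its image $\bar I := I/(xy)$ inside the quotient ring $S/(xy)$. Since $I \subset \n^2 \subset \n$, this image lies in $M := \n/(xy)$, which decomposes as a direct sum $M = (x)/(xy) \oplus (y)/(xy)$ because $(x)\cap(y) = (xy)$ for the regular sequence $x,y$. Each summand is cyclic over an honest DVR: $(x)/(xy) \cong S/(y)$ (since $y$ annihilates $(x)/(xy)$) and similarly $(y)/(xy) \cong S/(x)$. The structural identity that drives everything is the vanishing of cross terms in $S/(xy)$: for $r \in (x)$ and $s \in (y)$ we have $rs \in (xy) = 0$.

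Let $\pi_x, \pi_y$ denote the two projections and set $J_x := \pi_x(\bar I)$ and $J_y := \pi_y(\bar I)$; each is a submodule of a DVR-module, hence principal, so $J_x = (x^\alpha)$ and $J_y = (y^\beta)$ (either possibly zero). We always have $\bar I \subseteq J_x \oplus J_y$, and the argument splits on whether equality holds. If it does, then $\bar I$ is generated by the images of $x^\alpha$ and $y^\beta$, so $I = (xy, x^\alpha, y^\beta)$, giving case (b) when both projections are nonzero or a degenerate form of case (a) otherwise.

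The substantive case is $\bar I \subsetneq J_x \oplus J_y$; then both projections are nonzero, and by symmetry I may assume $J_y \not\subset \bar I$. Using cross-term vanishing, $y\bar I = (y^{\beta+1})$ and in fact $\bar I \cap (y)/(xy) = (y^{\beta+1})$, since any element of $\bar I$ lying in the $y$-summand whose $y^\beta$ coefficient is a unit would generate $J_y$. Choose $h_1 \in \bar I$ with $x$-part $x^\alpha u(x)$ where $u(0) \neq 0$, and write $h_1 = x^\alpha u(x) + y^\beta r(y)$. If $r(0) = 0$, then $y^\beta r(y) \in (y^{\beta+1}) \subset \bar I$, so $x^\alpha u(x) \in \bar I$ and hence $(x^\alpha) \subset \bar I$; combining this with any element of $\bar I$ whose $y$-part has order exactly $\beta$ (which exists since $J_y = (y^\beta)$) forces $J_y \subset \bar I$, a contradiction. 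Hence $r(0) \neq 0$ and $h_1 = x^\alpha u + y^\beta v$ for units $u, v \in S$.

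Finally I would show $\bar I = S \cdot h_1$. For any $h = f + g \in \bar I$ with $f = x^\alpha p(x)$, subtracting $(p(x)/u(x)) h_1$ collapses via cross-term vanishing to leave a remainder entirely in the $y$-summand; this remainder must lie in $\bar I \cap (y)/(xy) = (y^{\beta+1}) \subset (h_1)$ (because $yh_1 = y^{\beta+1}v$), so $h \in (h_1)$. Thus $\bar I$ is cyclic and $I = (xy, x^\alpha u + y^\beta v)$, realizing case (a). The principal obstacle is the power-series bookkeeping under $xy = 0$: making sure cross terms collapse to their constant-in-$x$ (or constant-in-$y$) residues, and that the hypothesis $J_y \not\subset \bar I$ translates precisely into the vanishing statement about $\bar I \cap (y)/(xy)$ needed for the subtractions to close up.
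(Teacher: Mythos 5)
Your argument is correct, and it takes a genuinely different route from the paper's. The paper splits on the codimension of $I$: in codimension one it factors out a common divisor and reduces to the DVR $S/(y)$; in codimension two it uses the observation that modulo $(xy)$ every element of $\n$ can be written as $ux^\alpha + vy^\beta$ with $u,v$ units or zero, and then argues on two generators with minimal $x$- and $y$-exponents. You instead work throughout inside the single module $M = \n/(xy) = (x)/(xy)\oplus (y)/(xy)$, read off $J_x = \pi_x(\bar I)$ and $J_y = \pi_y(\bar I)$ (each cyclic since each summand lives over a DVR), and then resolve the dichotomy by asking whether $\bar I$ is the full product $J_x\oplus J_y$ or a proper ``graph-like'' submodule of it; in the latter case you show $\bar I$ is principal, generated by a genuinely mixed element $x^\alpha u + v y^\beta$. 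This is a Goursat-style picture that handles the codimension-one case automatically (as the case where one projection vanishes), makes the (a)/(b) dichotomy intrinsic rather than a choice of presentation, and gives a clean division algorithm $h \mapsto h - (p/u)h_1$ in place of the paper's ad hoc manipulation of generators. What the paper's version buys in exchange is a shorter writeup that needs no discussion of $\bar I\cap (y)/(xy)$; your bookkeeping about cross-terms collapsing is the analogue of the paper's ``every $x^\mu$ with $\mu>\alpha$ is a multiple of $f$'' remark. Both are sound; yours is arguably the more transparent structural proof.
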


\begin{proof}
If $I$ has codimension 1 then $I$ has a proper common divisor, which we may take to be $x.$ Writing $I = x(J+(y))$, we see that either $I = (xy)$ or $I = (xy, x^{\alpha})$
for some $\alpha\geq 1$ because $S/(y)$ is a discrete valuation ring with parameter $x.$

Any element of an Artinian local ring can be written as a polynomial in the generators of the maximal ideal with unit coefficients.
In particular, if $I$ has codimension 2, then any element of $S/(xy, \n I)$ is the image of an element of the form $f = ux^{\alpha}+vy^{\beta},$ where each of $u$ and $v$ is either 0 or a unit of
$S$ and $\alpha, \beta$ are non-negative. Note that if $u\neq 0$ then, modulo $xy,$ every $x^{\mu}$ with  $\mu > \alpha$ is a multiple of $f$ and similarly for $v$ and $y.$

If $I/(xy)$ is principal, then we may write $I = (xy, ux^{\alpha}+vy^{\beta}),$ and we are done. Otherwise, modulo $xy,$
 we may write two of the generators of $I$ as $ux^{\alpha}+vy^{\beta},\ px^{\gamma}+qy^{\delta},$ where $u$ and $q$ are units and $\alpha$ and $\delta$ are minimal.
 Thus we may assume that $p=0,$ and $v=0,$ so $I= (xy, x^{\alpha}, y^{\delta}).$ Notice that $\alpha, \beta$ have to be positive.
\end{proof}
 
\begin{thm}\label{mindec} 
The following are equivalent$\, :$
\begin{enumerate}[label={$($\arabic*\,$)$}]
 \item The module $\m$ is decomposable.
 \item We may write $\n = (x,y)$ with  $xy\in I$ and $R$ is neither  a discrete valuation ring nor a zero-dimensional complete intersection.
 \item  We may write $\n = (x,y)$ in such a way that $I = (xy, ux^{\alpha}, vy^{\beta}),$ where each of $u,v$ is a unit of $S$ or 0 and $\alpha,\beta$ are $ \geq 2.$
\end{enumerate}
In this case $\m\cong R/(0:x) \oplus R/(0:y).$
\end{thm}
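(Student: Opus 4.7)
The plan is to prove the cycle $(3) \Rightarrow (1) \Rightarrow (2) \Rightarrow (3)$, folding the explicit decomposition $\m \cong R/(0:x)\oplus R/(0:y)$ into the first implication.

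For $(3) \Rightarrow (1)$, I would consider the natural surjection
$$\phi \colon R/(0:x) \oplus R/(0:y) \twoheadrightarrow \m, \quad (\bar a,\bar b) \mapsto \overline{ax+by}.$$
Surjectivity is clear, and both source summands are nonzero because $x,y \notin I$ (using $I \subset \n^2$). For injectivity it suffices to show that $ax+by \in I$ forces $ax \in I$; the symmetric argument then gives $by \in I$. Given an expression $ax+by = p\cdot xy + q\cdot ux^\alpha + r\cdot vy^\beta$ in $S$, reducing modulo $(y)$ leaves $ax \equiv qux^\alpha \pmod{(y)}$ in the DVR $S/(y) \cong k[[x]]$: when $u$ is a unit this forces $a \in (y,x^{\alpha-1})$, hence $ax \in (xy,x^\alpha) \subset I$; when $u = 0$ it gives $a \in (y)$, hence $ax \in (xy) \subset I$.

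For $(1) \Rightarrow (2)$, given $\m = M_1 \oplus M_2$ with both $M_i \neq 0$, tensoring with $k$ splits the (at most $2$-dimensional) $k$-space $\m/\m^2$, so both $M_i$ are cyclic and $\mu(\m) = 2$; in particular $R$ is not a DVR, $I \subset \n^2$, and $\n/\n^2 = \m/\m^2$. Lifting cyclic generators $\bar x, \bar y$ of $M_1, M_2$ to elements $x,y \in \n$, Nakayama over $S$ yields $\n = (x,y)$, while $\bar x \bar y \in M_1 \cap M_2 = 0$ gives $xy \in I$. To exclude the remaining possibility that $R$ is a zero-dimensional complete intersection, I invoke the fact noted in the introduction: over a zero-dimensional Gorenstein ring of embedding dimension at least $2$, every syzygy of $k$, and in particular $\m = \syz_1^R(k)$, is indecomposable.

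For $(2) \Rightarrow (3)$, \autoref{class-ideals} presents $I$ as either $(xy,x^\alpha,y^\beta)$, which already matches (3) with $u = v = 1$, or $(xy,\, ux^\alpha+vy^\beta)$ with $u,v \in \{0\} \cup S^\times$ and $\alpha,\beta \geq 2$ (the lower bounds being forced by $I \subset \n^2$). In the latter case, if both $u$ and $v$ were units then $xy$ and $ux^\alpha+vy^\beta$ would form a regular sequence of height $2$ in $S$ (their common vanishing locus is the origin), making $R$ a zero-dimensional complete intersection and contradicting (2); therefore at least one of $u,v$ is zero and $I$ collapses to one of $(xy)$, $(xy, ux^\alpha)$, $(xy, vy^\beta)$, each a special case of (3). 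I expect the main obstacle to be the injectivity of $\phi$ in $(3) \Rightarrow (1)$, where the single membership $ax+by \in I$ must be decoupled into $ax \in I$ and $by \in I$ across the small case analysis on whether $u,v$ are zero or units; the remaining implications are bookkeeping on top of \autoref{class-ideals}, Nakayama, and the cited indecomposability of syzygies over zero-dimensional Gorenstein rings.
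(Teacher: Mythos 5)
Your proposal is correct and follows essentially the same route as the paper: the cycle $(1)\Rightarrow(2)\Rightarrow(3)\Rightarrow(1)$ built on \autoref{class-ideals}, with the explicit decomposition folded into $(3)\Rightarrow(1)$, and your kernel computation for $\phi$ (reducing mod $(y)$, respectively $(x)$) is the same calculation the paper compresses into the assertion $(I,x)\cap(I,y)=I$. The one place you are less self-contained than the paper is in $(1)\Rightarrow(2)$: to rule out the zero-dimensional complete intersection case you cite the unproved remark from the introduction about syzygies over Artinian Gorenstein rings, whereas the paper gives the direct one-line argument that every nonzero ideal of a zero-dimensional Gorenstein local ring contains the $1$-dimensional socle and is therefore indecomposable; it would be cleaner to include that socle argument (or note that $0:(0:J)=J$ forces $\soc R\subset J$ for any nonzero ideal $J$) rather than appeal to the stronger, unjustified claim in the introduction.
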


\begin{proof}
\noindent $(1) \Rightarrow (2)$ 
If $\m$ is decomposable, then it has to decompose as $\m=xR\oplus yR$ where $\m = (x,y).$ This implies $xy\in I.$ 
Every ideal of a domain is indecomposable, and every non-zero ideal of a zero-dimensional local Gorenstein ring contains the socle, and thus is indecomposable. 

\noindent $(2) \Rightarrow (3)$ This follows from \autoref{class-ideals} because $R$ is not a discrete valuation ring or a zero-dimensional complete intersection. 

\noindent $(3) \Rightarrow (1)$ One easily check that $(I, x) \cap (I,y) =I.$
\end{proof}
 
\vspace{.1cm}

\section{The decomposition of $\syz^R_2(k) = \syz^{R}_{1} (\m)$}\label{sec4}

Again in this section $(S,\n,k)$ is a regular local ring of dimension 2 and $I$ is an ideal. Set  $R= S/I$ and $\m = \n R.$ We denote images in $R$ by $^{-}.$

\begin{prop}\label{mdec} If $\m$ is decomposable then
$\syz^R_2(k) = \syz^{R}_{1} (\m)\cong  \m \oplus  k^{a},$ where
$$
a=\begin{cases}
			2 & \text{if $\dim R =0$}\\
            1 & \text{if $\depth R = 0$ and $\dim R =1$}\\
            0 & \text{if $R$ is Cohen-Macaulay of dimension 1} \, .
            \end{cases}
$$
\end{prop}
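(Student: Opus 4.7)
The plan is to combine the normal form of $I$ from \autoref{mindec} with the isomorphism $\syz_1^R(\m) \cong \m^*$ of \autoref{syz and dual0}, which applies because \autoref{mindec} also guarantees that $R$ is not a zero-dimensional complete intersection whenever $\m$ is decomposable. By \autoref{mindec} one may choose regular parameters $x,y$ of $S$ so that $I = (xy,\, ux^{\alpha},\, vy^{\beta})$ with $u,v$ each either $0$ or a unit of $S$ and $\alpha,\beta \geq 2$, and so that $\m = xR \oplus yR$. The three cases of the proposition correspond exactly to the three possibilities for $(u,v)$: both units gives $R$ Artinian; exactly one a unit gives $\depth R = 0$ and $\dim R = 1$; and $u=v=0$ gives $R = S/(xy)$, which is Cohen--Macaulay of dimension $1$. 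Since each $zR \cong R/(0:_R z)$ is cyclic, we obtain
$$
\syz_2^R(k) \cong \m^{*} = (xR)^{*} \oplus (yR)^{*} \cong \bigl(0 :_R (0:_R x)\bigr) \oplus \bigl(0 :_R (0:_R y)\bigr),
$$
so the proof reduces to computing these two ideals of $R$.

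A direct calculation in $S/(xy, ux^{\alpha}, vy^{\beta})$, using its natural monomial $k$-basis, yields
$$
0 :_R x \;=\; \begin{cases}(\bar y,\, \bar x^{\alpha-1}) & \text{if $u$ is a unit,}\\ (\bar y) & \text{if $u = 0$,}\end{cases}
$$
and symmetrically for $0 :_R y$. The crucial auxiliary observation is that $\bar x^{\alpha-1}$ lies in $\soc(R)$ whenever $u$ is a unit: indeed $\bar x\cdot \bar x^{\alpha-1} = 0$ from the relation $ux^{\alpha}\in I$, and $\bar y\cdot \bar x^{\alpha-1} = \bar x^{\alpha-2}\cdot \overline{xy} = 0$. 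Hence $0:_R \bar x^{\alpha-1} = \m$ in that case, and symmetrically $0:_R \bar y^{\beta-1} = \m$ when $v$ is a unit. Combining these annihilator computations yields
$$
0 :_R (0:_R x) \;=\; xR \;+\; \begin{cases} k\cdot \bar y^{\beta-1} & \text{if $v$ is a unit,}\\ 0 & \text{if $v = 0$,}\end{cases}
$$
and the sum is direct because $xR$ is $k$-spanned by the pure powers of $\bar x$ and so does not contain $\bar y^{\beta-1}$; symmetric statements hold for $0:_R(0:_R y)$ in terms of $u$.

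Summing the contributions from the two summands gives $\syz_2^R(k) \cong \m \oplus k^{a}$, where $a$ is the number of units among $\{u,v\}$. This value is $2$, $1$, or $0$ in the three cases above, matching the table in the proposition. The main obstacle is purely bookkeeping: checking the explicit form of $0:_R x$, identifying the relevant socle element in each case, and confirming that it lies outside the corresponding cyclic summand of $\m$. The one subtlety is the depth-zero dimension-one case, where the extra $k$-summand appears in $(yR)^{*}$ rather than $(xR)^{*}$ (or vice versa, depending on which of $u,v$ is the unit), after which the three cases line up cleanly with the depth/dimension trichotomy.
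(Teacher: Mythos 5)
Your proof is correct, but it takes a genuinely different route from the paper's. The paper's proof never invokes \autoref{syz and dual0}: since $\m = \overline{x}R\oplus \overline{y}R$ with each $\overline{z}R \cong R/(0:z)$ cyclic, the first syzygy of $\m$ is computed directly as $\syz_1^R(\m) = (0:x)\oplus(0:y)$, and each annihilator ideal is then split into the indicated socle elements plus the complementary cyclic summand of $\m$, reading off $a$ from how many socle elements appear. You instead pass through the duality isomorphism $\syz_1^R(\m)\cong\m^*$ and compute $\m^* = \bigl(0:(0:x)\bigr)\oplus\bigl(0:(0:y)\bigr)$, which adds one extra layer of annihilator computation relative to the paper. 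Both routes amount to the same bookkeeping with the monomial $k$-basis of $R$ (indeed, in each case $0:(0:x)$ turns out to equal $0:y$ and vice versa, reflecting the same duality), so yours is correct; it is just slightly less economical, since it carries an unnecessary dependency on the more involved machinery of \autoref{syz and dual0}, whereas the paper's computation is self-contained within the structure theory of \autoref{mindec}. One modest advantage of your route is that it doubles as a consistency check on \autoref{syz and dual0} in the decomposable case.
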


\begin{proof} We apply the analysis of~\autoref{mindec}.
This shows that $I\subset \n^{2}$ and we may assume $\m=\overline{x}R\oplus \overline{y}R.$ Furthermore, if $\dim R =0$ then we can write  $I=(x^{\alpha},xy,y^{\beta}),$ where $\alpha, \beta$ are  $\ge 2$. In this case $\overline{x}R\cong R/(\overline{x}^{\alpha-1},\overline{y})$ and $\overline{y}R\cong R/(\overline{x},\overline{y}^{\beta-1}).$ Thus $ \syz_1^{R}(\m) = (\overline{x}^{\alpha-1},\overline{y}) \oplus (\overline{x},\overline{y}^{\beta-1})\cong k \oplus \overline{y}R \oplus \overline{x}R \oplus k \cong \m \oplus k^2$ since  $\overline{x}^{\alpha-1}, \overline{y}^{\beta-1}$ are in the socle of $R.$ 

If $\depth R = 0$ and $\dim R =1,$ then \autoref{mindec} shows that
we may write $R = S/(xy, x^{\alpha})$ with $\alpha \ge 2.$ Now $\overline{x}R \cong R/(\overline{y}, \overline{x}^{\alpha-1})$ and $\overline{y}R \cong R/\overline{x}R,$ so  $\syz_{1}^{R}(\m) =(\overline{y},\overline{x}^{\alpha-1}) \oplus \overline{x}R \cong \overline{y}R \oplus k \oplus \overline{x}R \cong \m \oplus k$ because
$\overline{x}^{\alpha-1}$ is in the socle of $R.$

Finally, if $R$ is Cohen-Macaulay of dimension 1, then $I=(xy)$ by \autoref{mindec} and all the modules $\syz_{i}^{R}(k)$ for $i\geq 1$ are isomorphic.
\end{proof}

Given generators $x, y$ of $\n$ and $h_1, \ldots, h_n$ of $I$ we consider, as in the proof of \autoref{syz and dual}, a $2 \times (n+1)$ matrix with entries in $S$
$$L=\begin{pmatrix}
 L_0 & L_{1}&\ldots&L_{t}
\end{pmatrix}= \begin{pmatrix} y & f_1 & \ldots & f_n \\-x & g_1 & \ldots & g_n \end{pmatrix}
$$
such that $\begin{pmatrix} h_1 &\ldots & h_n \end{pmatrix}= \begin{pmatrix} x& y \end{pmatrix} \begin{pmatrix} f_1 & \ldots & f_n \\g_1 & \ldots & g_n \end{pmatrix}.$

\begin{lem}\label{one quadric}
 Suppose that $\m$ is indecomposable, and let $L$ be a matrix as above.
If $R$ is not a complete intersection, then$\,:$
\begin{enumerate}[label={$($\alph*\,$)$}]
 \item $I+\n^{3}$ does not contain any element $xy$ such that $\n = (x,y).$
\item $\dim_{k}(I+\n^{3})/\n^{3} \leq 1.$
\item There exists a choice of generators  $x,y$  of $\n$ and a choice of $f_i, g_i$ such that the entries of every column of the form
$L_{0}+\sum_{i>0}\lambda_{i} L_{i}$ generate $\n$ for all $\lambda_i \in S.$
\end{enumerate}
\end{lem}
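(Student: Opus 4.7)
The plan is to prove (a), then use it to prove (b), and finally use both to prove (c).

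For (a), I would pass to the completion $\widehat S$ (which preserves indecomposability of $\mathfrak{m}$ and the non-CI hypothesis, as the introduction observes), and suppose for contradiction that $xy \in I + \mathfrak{n}^3$ for some regular parameters $x,y$. A Newton-style iteration should upgrade this to $x'y' \in I\widehat S$ for nearby regular parameters: given $x_n, y_n$ with $x_n y_n - h_n = p_n \in \mathfrak{n}^{n+3}$ for some $h_n \in I$, write $p_n = x_n a_n + y_n b_n$ with $a_n, b_n \in \mathfrak{n}^{n+2}$ and set $x_{n+1} := x_n - b_n$, $y_{n+1} := y_n - a_n$, giving $x_{n+1} y_{n+1} - h_n = a_n b_n \in \mathfrak{n}^{2n+4}$. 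Taking $\mathfrak{n}$-adic limits and using the closedness of $I\widehat S$, one obtains regular parameters $x',y'$ of $\widehat S$ with $x'y' \in I\widehat S$; \autoref{mindec} then forces $\hat{\mathfrak{m}}$ to decompose, a contradiction.

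For (b), I would suppose $\dim_k (I+\mathfrak{n}^3)/\mathfrak{n}^3 \geq 2$. If the dimension is $3$, then $xy$ itself lies in $(I+\mathfrak{n}^3)/\mathfrak{n}^3$, directly contradicting (a). Otherwise $\dim J = 2$ with $J := (I+\mathfrak{n}^3)/\mathfrak{n}^3$, and I pick a basis $\bar h_1, \bar h_2$ of leading quadratic forms of elements of $I$. If $\bar h_1, \bar h_2$ share a common linear factor $\bar\ell \in k[x,y]_1$, their $k$-span equals $\bar\ell\cdot k[x,y]_1$ and contains $\bar\ell\bar m$ for any linear form $\bar m$ independent of $\bar\ell$, again contradicting (a) via $x:=\ell,y:=m$. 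If instead $\bar h_1, \bar h_2$ are coprime in $k[x,y]$, then B\'ezout gives $(\bar h_1, \bar h_2) \supset k[x,y]_{\geq 3}$ in the graded ring, so Nakayama together with the Krull intersection theorem yields $\mathfrak{n}^3 \subset (h_1,h_2)$ in $S$, whence $I = (h_1,h_2)$. But then $I$ is generated by a regular sequence in the $2$-dimensional regular local ring $S$, making $R$ a complete intersection---contradicting our hypothesis.

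For (c), using (b), I would choose minimal generators $h_1,\ldots,h_n$ of $I$ with $h_i \in \mathfrak{n}^3$ for $i \geq 2$ and, when $\dim J = 1$, with $h_1$ lifting a generator of $J$. For $i \geq 2$, pick $f_i, g_i \in \mathfrak{n}^2$ (possible since $\mathfrak{n}^3 = x\mathfrak{n}^2 + y\mathfrak{n}^2$). Then for arbitrary $\lambda_i \in S$, the contributions $\lambda_i f_i, \lambda_i g_i$ for $i \geq 2$ lie in $\mathfrak{n}^2$, so modulo $\mathfrak{n}^2$ the entries of $L_0 + \sum_i \lambda_i L_i$ reduce to $y + \bar\lambda_1 \bar f_1$ and $-x + \bar\lambda_1 \bar g_1$ with $\bar\lambda_1 \in k$ the residue of $\lambda_1$. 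The remaining task is to choose $x, y$ and $(f_1, g_1)$ so that this pair is $k$-linearly independent in $\mathfrak{n}/\mathfrak{n}^2$ for every $\bar\lambda_1 \in k$ (vacuous when $\dim J = 0$). If $\bar h_1 = c\bar\ell^2$ is a nonzero scalar multiple of a square, a coordinate change setting $\bar x = \bar\ell$ together with the choice $(\bar f_1, \bar g_1) = (c\bar x, 0)$ makes the $2\times 2$ coefficient determinant identically $1$. Otherwise (a) forces $\bar h_1$ to be irreducible with no scalar multiple being a square; using the Koszul-syzygy freedom $(f_1, g_1) \mapsto (f_1 - y\nu, g_1 + x\nu)$ to adjust the linear parts, the determinant becomes a quadratic polynomial in $\bar\lambda_1$ with constant term $1$, and (a) rules out its roots---either via a non-square discriminant (in characteristic $\neq 2$) or via an Artin--Schreier non-triviality on the coefficients of $\bar h_1$ (in characteristic $2$). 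The hardest part will be this last case of (c), particularly the characteristic-$2$ subcase, where the discriminant vanishes identically and must be replaced by an Artin--Schreier argument extracted from the irreducibility of $\bar h_1$ provided by (a).
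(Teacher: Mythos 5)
Your three-part scaffolding matches the paper's, but there are real gaps. In (a), you reduce to the complete case by asserting that indecomposability of $\m$ is preserved by completion ``as the introduction observes.'' That remark concerns only the count of $k$-summands; the direction you actually need, namely $\widehat{\m}$ decomposable $\Rightarrow \m$ decomposable, is exactly what fails for the nodal cubic $k[x,y]_{(x,y)}/(y^2-x^2-x^3)$, whose $\m$ is indecomposable (domain) while $\widehat{\m}$ splits along the two analytic branches. The non-CI hypothesis excludes that example, but proving the implication \emph{under that hypothesis} leads right back to the paper's own $\dim R=1$ argument (factoriality of $S$, writing $I=aI'$, and \autoref{mindec}(2)); your completion step hides the substance of (a) rather than avoiding it. The paper runs the Newton iteration only when $\dim R=0$, where $\n^p\subset I$ makes it terminate after finitely many steps --- no limit, no completion. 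Your (b) is a correct alternative to the paper's (a gcd case analysis on $\bar h_1,\bar h_2$ rather than a coefficient case analysis), reducing in either case to (a) or to $R$ being a complete intersection.

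Your (c) is a genuinely different, more computational argument, and it has a second gap. The paper argues indirectly: if $L_0+L_1'$ had dependent entries mod $\n^2$, then $\det L'$ would factor mod $\n^3$ as a product of two linear forms, which by (a) must be proportional; so $I+\n^3=(x^2)+\n^3$, and the coordinate choice $x=\ell$ forces $L'\equiv\begin{pmatrix}y&x\\-x&0\end{pmatrix}\pmod{\n^2}$, an immediate contradiction. This sidesteps your discriminant/Artin--Schreier calculation entirely and needs no characteristic hypothesis. Your direct approach does work --- the polynomial $1+(b_1-a_2)\bar\lambda+(a_1b_2-a_2b_1)\bar\lambda^2$ has the same discriminant as $\bar h_1$, and in characteristic $2$ one can check that the relevant Artin--Schreier class is unchanged by the Koszul adjustment $\nu$ and agrees with that of $\bar h_1$ --- but you have left the characteristic-$2$ verification as a sketch, which is a genuine omission in the write-up even if fillable.
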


\begin{proof} Since $R$ is not a complete intersection, we must have $I\subset \n^{2}$.

\noindent {\bf (a):}  Suppose first that $\dim R = 1.$ Since $S$ is factorial and $I$ is not a complete intersection,
 we may write $I = aI'$ where $I'$ is an ideal of codimension 2. If $I$ contains an element of order 2, then $a$ must have
 order 1. By condition (2) of \autoref{mindec}, any element of $I'$ that has order 1 must be a multiple of $a$,
completing the proof in the 1-dimensional case.

Now assume that $\dim R=0.$ Suppose that  $I$ contains an element $xy+f$ with $\ord f \geq 3$ such that $\n = (x,y).$ If $\n^{p}\subset I$ and 
$\ord f\geq p$ then $xy\in I,$ which is impossible by \autoref{mindec}(2). Otherwise, suppose there is
an expression $xy+f\in I$  such that $\n = (x,y)$ with order $\ord f$ maximal and $<p.$ 

We may write
$f = xf_{1}+yf_{2} +g$ with $\min\{\ord f_{1}, \ord f_{2}\} \geq \ord f-1$ and  $\ord g > \ord f.$  Thus $xy+f = (x+f_{2})(y+f_{1})+ (g- f_{1}f_{2}).$ Note that
$\ord (g-f_{1}f_{2})>\ord f.$ We may replace $x,y$ by $x+f_{2}, y+f_{1},$ thus increasing the order of $f,$ a contradiction.

\vskip .2cm
\noindent{\bf (b):} Suppose on the contrary that $ax^{2}+bxy+cy^{2}, a'x^{2}+b'xy+c'y^{2}$ are
linearly independent elements of $I+\n^{3}/\n^{3},$ where
$x,y$ are generators of $\n/\n^{3}$ and the coefficients $a,\dots, c'$ are in $k.$ By taking a linear combination, we may assume that $a=0,$ in which
case we are done by part (a) unless also $b=0.$ If on the other hand $b=0,$ then $c\neq 0,$ so we may assume that $c'=0.$ Now we are done unless $b'= 0.$ If $b'=0,$ then $x^{2}$ and $y^{2}$
are in $I+\n^{3}/\n^{3},$ but $xy$ is not by part (a). Thus the associated graded ring of $R,$ and with it $R$ itself, is a zero-dimensional complete intersection,
a contradiction. This shows that $\dim_{k}(I+\n^{3})/\n^{3} \leq 1,$ completing the proof.

\vskip .2 cm
\noindent{\bf (c):} By part (b) the quotient $(I+\n^{3})/\n^{3}$ is cyclic. If $I+\n^3=(\ell^2) +\n^3$ for some element $\ell$ of order 1, we choose generators $x=\ell, y$ for $\n.$ Otherwise we make an arbitrary choice. Furthermore, we may choose $f_i$ and $g_i$ to be in $\n^2$ for $i>1.$ If  $I\subset \n^3$ we also choose $f_{1}$ and $g_1$ to be  in $\n^2.$ If $I+\n^3=(x^2) +\n^3,$ we choose  $f_1\equiv x \mod \,\n^2$ and $g_1 \in \n^2.$
%
%
 

Now consider the $2\times 2$ matrix
$$
L' :=\begin{pmatrix}
 L_{0} &L_{1}'
\end{pmatrix}:=
\begin{pmatrix}
y&\sum_{i>0}\lambda_{i} f_{i} \\
-x &\sum_{i>0}\lambda_{i} g_{i}
\end{pmatrix}.
$$
If $\lambda_{1}\in \n$ or $I\subset \n^3,$ then $L_{0}+L_{1}' \equiv L_{0}  \mod \,  (\n^2\oplus \n^2)$ and the claim follows. Thus we can assume that $\lambda_{1}$ is a unit
and $I \not\subset \n^3.$ In this case 
${\rm det} (L') \not\in \n^3.$ Moreover ${\rm det}(L') \in I$ by the definition of the matrix $L.$ The determinant of $L'$ is also the determinant of the $2\times 2$ matrix
$(L_{0}+L_{1}' \ \ L_{1}').$ If the entries of $L_{0}+L_{1}'$ were linearly dependent modulo $\n^2,$ then the determinant would factor modulo $\n^3.$ Therefore $I+\n^3=(x^2)+\n^3$ by part (a). 
But then, 
modulo $\n^{2}$ the matrix $L'$ must be
$$
\begin{pmatrix}
y &x\\
-x&0
\end{pmatrix}.
$$
\vskip .1cm
\noindent
Thus the entries of $L_{0}+L_{1}'$ generate $\n$ as claimed.
 \end{proof}

The significance of the matrix $L$ considered in \autoref{one quadric} is that the columns of $\overline{L}$ are obviously a generating set of $\syz_1^R(\m),$  and even a minimal generating set by \cite[Satz 5]{Sc64}. In particular $\mu(\syz_{1}^{R}(\m))=\mu(I) +1,$ where $\mu(\cdot)$ denotes minimal number of generators. 

\begin{thm} \label{Nindec} Suppose that $I\subset \n^{2}$.
Write
$\syz_1^R(\m)=N \oplus N'$  where $N' \cong k^a$ and $N$ has no $k$-summands. If $\m$ is indecomposable then$\,:$
\begin{enumerate}[label={$($\alph*\,$)$}]
 \item  
 $a= \dim_{k}\left(\frac{\n(I:\n)}{\n I}\right)$ and $\mu(N)=\mu(I)+1-a\ge 1.$ 
 \item $N$ is indecomposable.
\end{enumerate}

\end{thm}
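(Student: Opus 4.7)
For part (a), I plan to compute $a$ as $\dim_k \soc(M)/(\soc(M) \cap \m M)$, where $M = \syz_1^R(\m)$. I first identify $\soc(M)$ inside $R^2$: since $f, g \in I:\n$ automatically forces $xf + yg \in I$, we have $\soc(M) = \{(\overline f, \overline g) \in R^2 : f, g \in I:\n\} \cong \soc(R)^{\oplus 2}$. I then introduce the surjection
\[
\Phi \colon \soc(M) \twoheadrightarrow \n(I:\n)/\n I, \qquad (\overline f, \overline g) \mapsto \overline{xf + yg},
\]
and claim $\ker \Phi = \soc(M) \cap \m M$. Containment $\supseteq$ is routine: if $(\overline f, \overline g) = \sum s_j L_j$ with $s_j \in \m$, then $xf + yg \equiv \sum s_i h_i \pmod{\n I}$, which lies in $\n I$. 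For $\subseteq$, given $(\overline f, \overline g)$ with $xf + yg \in \n I$, I write $xf + yg = \sum t_i h_i$ with $t_i \in \n$; then $x(f - \sum t_i f_i) + y(g - \sum t_i g_i) = 0$ in $S$, and regularity of $x, y$ provides $s \in S$ with $(f, g) \equiv s(y, -x) + \sum t_i(f_i, g_i) \pmod I$. The crux is ruling out $s$ being a unit: if so, \autoref{one quadric}(c) would force the entries of $\overline s L_0 + \sum \overline{t_i} L_i$ to generate $\m$, but these entries are $\overline f, \overline g \in \soc(R)$, and $\soc(R) \subsetneq \m$ since $\m$ indecomposable with $e = 2$ forces $\m^2 \neq 0$ (otherwise $\m \cong k^2$ would be decomposable as an $R$-module). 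Hence $s \in \n$, giving $(\overline f, \overline g) \in \m M$. The bound $\mu(N) = n + 1 - a \geq 1$ follows from $\n(I:\n) \subseteq I$, so $a \leq \mu(I) = n$.

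For part (b), the plan is to show $N$ is indecomposable by exploiting the rigidity coming from $\m$ being indecomposable. By \autoref{syz and dual0}, $M \cong \m^*$. Since $\m$ is indecomposable, $\operatorname{End}_R(\m)$ is local, and the contravariant dualization $\Hom_R(-, R)$ produces an anti-homomorphism $\operatorname{End}_R(\m) \to \operatorname{End}_R(\m^*)^{\mathrm{op}} = \operatorname{End}_R(M)^{\mathrm{op}}$. Any nontrivial decomposition $N = N_1 \oplus N_2$ (with both summands nonzero and without $k$-summands) would produce a nontrivial idempotent of $\operatorname{End}_R(M)$ splitting $N_1$ from $N_2 \oplus k^a$, and I would aim to derive a contradiction with the rigidity supplied by the local image of $\operatorname{End}_R(\m)$.

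I expect the main obstacle to be making this incompatibility precise. A more hands-on variant uses the explicit description in \autoref{THM1.3}: $N$ is generated by the columns $L_0, L_1, \dots, L_{n-a}$ of $\overline L$, and \autoref{one quadric}(c) asserts that every column of the form $L_0 + \sum_{i \geq 1} \lambda_i L_i$ has entries generating $\m$. This rigidity ought to prevent $L_0$ from being absorbed into a proper direct summand of $N$ without dragging all the other non-socle columns along with it. Turning this intuition into a proof is the delicate step; it may be cleanest to treat the Gorenstein (hypersurface) case separately --- where Gorenstein duality on maximal Cohen--Macaulay modules gives $\m^*$ indecomposable whenever $\m$ is --- and handle the Artinian non-Gorenstein and $\depth R = 0$ cases by direct analysis of $\overline L$.
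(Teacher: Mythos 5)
Your proof of part (a) is essentially correct and follows the same idea as the paper, though packaged slightly differently: your map $\Phi$ is the paper's $\psi$ corestricted to its image, and your direct syzygy lift over $S$ replaces the paper's identification $\ker\psi = RL_{0}\cap \soc Z$. Two small points, though. First, your invocation of \autoref{one quadric}(c) to rule out $s$ being a unit is both overkill and, as literally stated, inapplicable when $R$ is a complete intersection, since that lemma carries the hypothesis ``$R$ is not a complete intersection.'' Fortunately you do not need it: in your situation the coefficients $\lambda_{i}=s^{-1}t_{i}$ lie in $\n$, and since every $f_{i},g_{i}$ automatically lies in $\n$ (as $h_{i}\in\n^{2}$), the column $L_{0}+\sum\lambda_{i}L_{i}$ is congruent to $L_{0}$ modulo $\n^{2}\oplus\n^{2}$, so its entries generate $\n$ by Nakayama with no appeal to \autoref{one quadric}(c) at all. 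The paper sidesteps this entirely by first identifying $\ker\psi$ with $RL_{0}\cap\soc Z$ and then observing that $\m L_{0}\neq 0$. Second, a trivial slip: the equation $(f,g)=s(y,-x)+\sum t_{i}(f_{i},g_{i})$ holds on the nose in $S^{2}$, not merely modulo $I$.

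Part (b), however, is not a proof. You propose two strategies and acknowledge that neither is complete. The duality route is actually a dead end: $\Hom_{R}(-,R)$ is not a duality on finitely generated $R$-modules here, $\m^{*}\cong N\oplus k^{a}$ is itself decomposable whenever $a>0$, and the map $\End_{R}(\m)\to\End_{R}(\m^{*})^{\mathrm{op}}$ need not be surjective, so locality of $\End_{R}(\m)$ gives no control over idempotents of $\End_{R}(\m^{*})$. Your ``hands-on variant'' points in the right direction but is missing the key lemmas that make the paper's argument work. The paper first isolates the crucial isomorphism $\m N\cong \m/(0:\m)$, which reduces the problem to the case where $\m/(0:\m)$ is decomposable; then, by a case analysis through \autoref{mindec}, proves the bound $\mu(N)\le 2$ in that case; and only then applies \autoref{one quadric}(c) to see that the generator of shape $L_{0}+\sum_{i>0}\lambda_{i}L_{i}$, whose annihilator is exactly $0:\m$, cannot sit inside a proper cyclic summand of a two-generated $N$ without forcing the complementary summand to vanish. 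None of these steps appears in your sketch, and without them the ``rigidity'' you cite does not become an argument.
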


\begin{proof} 


\smallskip 
We may assume that $I\not=0.$ We fix generators $x,y$ of $\n$  and the corresponding embedding $Z:=\syz_1^R(\m)\subset R^2\, ,$ and we use the notation introduced before \autoref{one quadric}. 

\smallskip
\noindent {\bf (a):} Since $\mu(Z)=\mu(I) +1,$ we have  $\mu(N) = \mu(I)+1-a.$ 

 Notice that $$a=\dim_k \left(\frac{\soc Z}{\m Z\cap \soc Z}\right) .$$
Thus it suffices to prove that
$$\frac{\soc Z}{\m Z\cap \soc Z}\cong \frac{\n(I:\n)}{\n I}\, .
$$

\vspace{.02cm}

\noindent
To this end we define an $R$-linear map $\psi$ as the composition of the maps
$$
\soc Z \lto \frac{Z}{RL_0} =H_1(x,y; R) \stackrel{\sim}{\lto} \frac{I}{\n I}\,. $$
Notice that $\psi((f,g))=(xF+yG)+\n I,$ where $F,$ $G$ are preimages of $f,g$ in $S.$ As $\soc Z=\soc R^2,$ it follows that $\im \psi=\frac{\n(I:\n)}{\n I}.$ Clearly $\ker \psi =RL_0\cap \soc Z.$

Thus it remains to prove 
$$RL_0\cap \soc Z=\m Z \cap \soc Z\, .$$
The right hand side is in the left hand side, because  $ \frac{Z}{RL_0}=H_1(x,y; R)$ and therefore $\m Z \subset RL_0.$ As to the converse, the indecomposibility of $\m$ implies  that $I \not= \n^2,$ hence $\m L_0\not=0.$ Therefore $RL_0\cap \soc Z \subset \m L_0 \subset \m Z.$



\vskip .2cm

\noindent {\bf (b):} If $R$ is Gorenstein, then $Z$ is indecomposable. If $I=0$ this is obvious and otherwise it follows from the fact that  syzygies of indecomposable maximal Cohen-Macaulay 
modules over local Gorenstein rings are indecomposable. Thus the assertion of (b) holds and we may assume that $R$ is not Gorenstein.

Since $Z=N \oplus N'$ and $\m N'=0,$ we have $\m N=\m Z.$ As shown above $\m Z \subset RL_0.$ But $\overline{L_0}$ is a minimal generator of $Z,$ hence $\m Z\subset \m L_0,$ and therefore $\m Z=\m L_0.$ Finally, $RL_0 \cong R/(0:\m),$ so $\m L_0\cong \m/(0:\m).$ Putting these facts together, we have
\begin{equation}\label{importantEQ}
 \m\, N \cong \m/(0:\m) .
\end{equation}
Since $N$ does not have $k$ as a direct summand, the indecomposability of $N$ follows from the indecomposability of $\m N\cong \m/(0:\m),$ so
it suffices to treat the cases where the maximal ideal $\m/(0:\m)$ of $S/(I:\n)$ is decomposable.
By \autoref{mindec}(3) this is the case if and only if for a suitable choice of $x$ and $y$ one has $I:\n = (xy, ux^{\alpha},vy^{\beta}),$ where each of $u,v$ is a unit or 0 and both of $\alpha,\beta$ are $\geq 2.$

We first show that in this case the module $N$ can be generated by 2 elements. Set $I' = \n\, (I:\n).$ It suffices to prove that $\dim_{k}(I/I')\le 1$ because part (a) gives $\mu(N)=1+\mu(I)-\dim_{k}(I'/\n I)=1+\dim_{k}(I/I').$

Suppose first that $\dim R = 1.$  In this case we may assume that $I:\n = (xy, ux^{\alpha}),$ hence $ I \subset (xy, ux^{\alpha}).$ 
By \autoref{mindec}(2) the ideal $I$ contains no product of two elements that generate the maximal ideal of $S,$ so no element of the form 
$xy + \lambda x^{\alpha}$ with $\lambda\in S$ can be in $I.$ Thus 
$ I\subset \n (xy) + ( ux^{\alpha}) =  (x^{2}y, xy^{2}, ux^{\alpha}) =: I''.$ As $I'=(x^2y,xy^2, ux^{\alpha+1}),$ it follows that  $\dim_k(I''/I') \leq 1.$ Now the containments  $I' \subset I \subset I''$ show that $\dim_k(I/I')\le 1.$

Now assume that $\dim R=0,$ thus $I:\n=(xy, x^{\alpha}, y^{\beta}),$ where $\alpha, \beta$ are $\ge 2.$ If  $\alpha = \beta = 2$ then $I:\n=\n^2$ and hence $I'=\n^{3}.$ Therefore $\dim_{k}(I/I')\le 1$ by \autoref{one quadric}(b).
Finally, without loss of generality, we can assume that $\alpha\ge 3.$ We have
$I \subset I:\n = (xy, x^{\alpha}, y^{\beta}).$ Since $\alpha \geq 3$,  \autoref{one quadric}(a) shows that $I$ cannot contain an element
of the form $xy + \lambda x^{\alpha} + \mu y^{\beta}.$ Thus $I \subset \n(xy) + (x^{\alpha}, y^{\beta})=(x^2y,xy^2, x^{\alpha}, y^{\beta})=:I''.$  As $x^{\alpha-1}\notin I:\n$ but $x^{\alpha-1}\in I'':\n$ because $\alpha\ge 3,$ the ideal $I$ cannot be equal to $I''.$ 
On the other hand $I'=(x^2y, xy^2, x^{\alpha +1}, y^{\beta +1})$ and so $\dim_k(I''/I') \leq 2.$ Since
$I' \subset I \subsetneq I'',$ we see that, again, $\dim_k(I/I')\le 1.$ This concludes the proof of the inequality $\mu(N)\le 2$ 
and the present choice of the elements $x,$ $y.$


Now choose $x, y$ and $L$ as in \autoref{one quadric}(c). Since $\m$ is indecomposable, $I\not=\n^2$ and thus $0:\m\subsetneq \m.$ Every minimal set of generators of $Z$ contains a unit times an element of the form
$L_{0} +\sum_{i>0}\lambda_{i}L_{i},$ whose annihilator is exactly $0:\m$ by \autoref{one quadric}(c). This generator cannot be among the minimal generators of $N',$
so every minimal set of  generators of $N$ contains such an element.

If  $N= A\oplus B$ with $A, B$ not zero,  then $A$ and $B$ must be cyclic because $\mu(N)\le 2.$  We may assume that $A$ is minimally generated by  an element of the form $L_{0} +\sum_{i>0}\lambda_{i}L_{i}$
and thus $A\cong R/(0: \m).$ In particular $\m A \oplus \m B = \m N \cong \m A,$ where the last isomorphism holds by (\ref{importantEQ}). This implies that $\m B = 0$ because the number of generators of $\m B$ is 0.
Since $N$ does not have $k$ as a direct summand, $B=0,$ and we are done.
\end{proof}

If in Theorem \ref{Nindec} the ring $R$ is Gorenstein, that is, a complete intersection, then $a=0.$ Indeed, as explained in the proof above,
$\syz_1^R(\m)$ is indecomposable. Thus $\syz_1^R(\m)=N$ because $N \neq 0.$ Alternatively, one can argue that
$\n(I:\n) = \n I.$ We may assume that $I$ is $\n$-primary, hence generated by a regular sequence $h_1, h_2$ contained in $\n^2.$ As before we write 
 $\begin{pmatrix} h_1 \ \,  h_2 \end{pmatrix}= \begin{pmatrix} x \ \,  y \end{pmatrix} L,$ where $L$ is a $2 \times 2$ matrix with entries in $\n.$ Multiplying this equation
 with the adjoint of $L,$ whose entries are again in $\n,$ one sees that  $\n \Delta  \subset \n I$ with $\Delta = \det(L).$ On the other hand $I :\n=I +(\Delta),$ 
 showing that $\n(I:\n) \subset \n I.$ For more general results along these lines see \cite[Proposition 2.1 and the proof of Theorem 2.2]{CP}.

\begin{proof}[Proof of \autoref{THM1.3}]  By construction the module $N'$ of \autoref{THM1.3} is minimally generated by $a$ elements in the socle of $\syz^{R}_{1} (\m)$ that form part of a minimal generating set of $\syz^{R}_{1} (\m).$ So $N'$ is a direct summand of $\syz^{R}_{1} (\m)$ and $N'\cong k^a,$ with $a$ as in \autoref{Nindec}. Since the number of $k$-summands only depends on $\syz^{R}_{1} (\m),$ the quotient $\syz^{R}_{1} (\m)/N'$ cannot have any $k$-summands and hence is indecomposable
by \autoref{Nindec}. \end{proof}

\section{Proof of Theorem \ref{generalGolod} and applications}\label{proofs}

\begin{proof}[Proof of \autoref{generalGolod}] 
For $j\ge 1$ let $A_j$ be $R$ tensored with the $j^{\rm{th}}$ module in a minimal $S$-free resolution of $R,$ and for $j\ge 2$ set $B_j=A_{j-1}.$ Consider the graded free $R$-module $B=\oplus_{j=2}^{e+1}B_j.$ Write $T=T_{R}(B)$ for the tensor algebra of $B$ over $R$ and $K=K(\m; R)$ for the Koszul complex of $\m.$ As a graded $R$-module, the minimal $R$-free resolution of $k$ is isomorphic to $F: = K\otimes_R T$ because $R$ is Golod.
There are isomorphisms of $R$-modules
$T \cong R\oplus T\otimes_R B,$ and hence $F \cong  K \oplus F\otimes_R B.$ The description of the differential of $F$ in terms of Massey operations shows that $d_F$ is of the form 
\vspace{.1cm}
$$
\begin{tikzcd}[column sep=tiny]
F \arrow{d}{d_F} & \cong & K  \arrow{d}{d_K} & \oplus  & F\otimes B   \arrow{lld} \arrow{d}{d_F\otimes B}\\
F & \cong & K & \oplus   & F\otimes B \, .
\end{tikzcd}
$$

\vspace{.1cm}

\noindent
(see, for instance, \cite[Theorem 5.2.2]{Avramov2010} and its proof). Since $K$ is concentrated in degrees $\le e,$ we obtain,   for $i> e$,
an isomorphism of complexes
\vspace{.1cm}
$$
F_{\ge i}\cong F_{\ge i-e-1} \otimes B_{e+1} \oplus \ \ldots \ \oplus F_{\ge i-2} \otimes B_{2}\, .$$
The assertion now follows because $B_j$ is a free $R$-module of rank $h_{j-1}.$ 
 \end{proof}

\begin{cor}\label{highersyz} Let $(R,\m,k)$ be a Noetherian local ring of embedding dimension $2.$ If $R$ is neither regular nor a zero-dimensional complete intersection, then
\[\syz_3^R(k)\cong k^{\mu(I)-1}\oplus \m^{\mu(I)}\, .
\]\end{cor}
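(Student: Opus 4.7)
The plan is to invoke \autoref{generalGolod} and then compute the two relevant Koszul homology ranks explicitly. Since $R$ has embedding dimension $2$ and is not a zero-dimensional complete intersection, \cite{Sc64} implies that $R$ is Golod, so \autoref{generalGolod} applies. With $e=2$ and $i=3$, the summation index $j$ runs over $\{0,1\}$, yielding
\[
\syz_3^R(k) \;\cong\; \syz_0^R(k)^{h_2} \oplus \syz_1^R(k)^{h_1} \;=\; k^{h_2} \oplus \m^{h_1},
\]
where $h_j = \dim_k H_j(K_\bullet)$ for the Koszul complex on a minimal generating set of $\m$. It therefore suffices to show that $h_1 = \mu(I)$ and $h_2 = \mu(I)-1$.

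For this, following the convention of the paper, write $R = S/I$ with $(S,\n)$ regular local of dimension $2$ and $I \subset \n^2$. The Koszul complex on a regular system of parameters of $S$ is a minimal $S$-free resolution of $k$, and $K_\bullet$ is obtained from it by applying $-\otimes_S R$, so $h_j = \dim_k \Tor^S_j(R,k) = \beta^S_j(R)$. If $I$ is principal, the minimal $S$-free resolution of $R$ takes the form $0 \to S \to S \to R \to 0$, giving $h_1 = 1 = \mu(I)$ and $h_2 = 0 = \mu(I)-1$. Otherwise, factor $I = gJ$ with $g$ a gcd of a minimal generating set of $I$ and $J$ an $\n$-primary ideal, using that $S$ is a two-dimensional regular UFD. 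Multiplication by the nonzerodivisor $g$ induces an isomorphism $\syz_S^1(J) \cong \syz_S^1(I)$, so the Hilbert--Burch theorem applied to $J$ produces a minimal $S$-free resolution $0 \to S^{n-1} \to S^n \to S \to R \to 0$ of $R$ with $n = \mu(I)$, again giving $h_1 = \mu(I)$ and $h_2 = \mu(I)-1$.

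Substituting into the displayed isomorphism completes the proof. There is no serious obstacle; the only point requiring care is the factorization $I = gJ$ in the non-principal case, which is straightforward once one uses that $g$ is a nonzerodivisor and that the ideal $J$ has grade $2$ so that Hilbert--Burch applies.
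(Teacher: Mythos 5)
Your proposal is correct and takes essentially the same approach as the paper: invoke \autoref{generalGolod} with $e=2$, $i=3$, then identify $h_1$ and $h_2$ as the Betti numbers $\beta^S_1(R)=\mu(I)$ and $\beta^S_2(R)=\mu(I)-1$. The only difference is that the paper asserts these Betti number values without comment, whereas you justify them via the principal/Hilbert–Burch case split; that added detail is accurate (the factorization $I=gJ$ and the grade-$2$ claim for $J$ hold since $S$ is a $2$-dimensional regular local UFD) but does not change the route.
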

\begin{proof} Since $R$ is Golod, we may apply \autoref{generalGolod}. The result follows because the dimensions of the Koszul homology are
the Betti numbers of $R$ as an $S$-module: $h_{1} = \mu(I)$ and $h_2=\mu(I)-1.$ 
\end{proof}

\begin{thm}\label{main}
Let $(R,\m,k)$ be a Noetherian local ring. If $R$ has embedding dimension $\leq 2$ and is not a zero-dimensional complete intersection, then
every minimal $R$-syzygy of $k$ is a direct sum of copies of $k,$ $\m,$ and $\m^{*}:=\Hom_{R}(\m, R) = \syz_{1}^{R}(\m).$ 
Moreover, copies of at most 3 indecomposable modules are required to build all the syzygies of $k$ as direct sums.
\end{thm}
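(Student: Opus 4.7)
The plan is to treat the trivial low-embedding-dimension cases separately, use the Golod recursion \autoref{generalGolod} to express every syzygy of $k$ as a direct sum of copies of the first three, and finally count distinct indecomposable summands using the analyses of $\m$ and $\m^{*}$ from Sections~\ref{sec3} and \ref{sec4}.

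First, I would pass to the $\m$-adic completion, which preserves all hypotheses and reflects indecomposable decompositions, so that $R=S/I$ with $(S,\n,k)$ regular local of dimension $e\le 2$ and $I\subset \n^{2}$. The case $e=0$ forces $R=k$, whose only syzygy is $k$ itself. The case $e=1$ forces $I=0$ (otherwise $I=(x^{n})$ with $n\ge 2$, which would make $R$ a zero-dimensional complete intersection, contradicting the hypothesis); hence $R$ is a discrete valuation ring, $\m\cong R$ is projective, and the only nonvanishing syzygies of $k$ are copies of $k$ and $\m$.

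For the main case $e=2$, Scheja's theorem \cite{Sc64} shows that $R$ is Golod. I would then invoke \autoref{generalGolod} with $e=2$, obtaining
\[\syz^{R}_{i}(k)\cong \syz^{R}_{i-2}(k)^{h_{1}}\oplus \syz^{R}_{i-3}(k)^{h_{2}}\qquad (i\ge 3).\]
Together with $\syz^{R}_{0}(k)=k$, $\syz^{R}_{1}(k)=\m$, and $\syz^{R}_{2}(k)\cong \m^{*}$ from \autoref{syz and dual0}, a straightforward induction on $i$ shows that every $\syz^{R}_{i}(k)$ is a direct sum of copies of $k$, $\m$, and $\m^{*}$; this gives the first assertion.

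For the second assertion I would invoke Krull--Schmidt over the complete ring $R$ to reduce to counting isomorphism classes of indecomposable summands of $k$, $\m$, and $\m^{*}$, then split according to \autoref{mindec}. If $\m$ is indecomposable, \autoref{Nindec}(b) yields $\m^{*}\cong N\oplus k^{a}$ with $N$ indecomposable, so only the three classes $\{k,\m,N\}$ appear. If $\m$ is decomposable, \autoref{mdec} gives $\m^{*}\cong \m\oplus k^{a}$, so the indecomposable summands are those of $\m$ together with $k$; by \autoref{mindec}(3), $\m\cong R/(0{:}x)\oplus R/(0{:}y)$ for a suitable regular system of parameters, and the step that requires the most care is verifying that each of $R/(0{:}x)$ and $R/(0{:}y)$ is itself indecomposable. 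This is where the explicit presentation $I=(xy,ux^{\alpha},vy^{\beta})$ from \autoref{mindec}(3) is crucial: computing the annihilators in $R$ directly identifies each quotient as a cyclic module over a one-dimensional quotient of $S$, essentially a quotient of a DVR, and such modules are always indecomposable. Either way, at most three distinct isomorphism classes arise, completing the proof.
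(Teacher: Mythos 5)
Your argument follows the paper's proof exactly, which simply assembles \autoref{generalGolod}, \autoref{syz and dual0}, \autoref{mindec}, \autoref{mdec}, and \autoref{Nindec}; your explicit induction, handling of embedding dimension $\le 1$, and Krull--Schmidt case analysis are the natural elaborations of that terse citation. One small simplification: the indecomposability of $R/(0{:}x)$ and $R/(0{:}y)$ requires no special care, since any cyclic module over a local ring is indecomposable (its endomorphism ring is a quotient of $R$, hence local, hence has no nontrivial idempotents).
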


\begin{proof}
 The first assertion follows from \autoref{generalGolod} and \autoref{syz and dual0}, and the second assertion is a consequence of   \autoref{mindec}, \autoref{mdec}, and \autoref{Nindec}. 
\end{proof}

\begin{thm}\label{decomp of syzygies}
Let $(S,\n, k)$ be a regular local ring of dimension 2, and let $I$ be an ideal contained in $\n^{2}.$ Write $R= S/I$ and $\m = \n R.$ 
\begin{enumerate}[label={$($\alph*\,$)$}]
\item $\syz^{R}_{1}(k)=\m$ is indecomposable if and only if $xy\not\in I$ for any $x,y$ with $\n=(x,y)$ or $R$ is a zero-dimensional complete intersection.
\item $\syz^{R}_{2}(k)$ is indecomposable if and only if $\m$ is indecomposable and $(I:\n)\n = I\n.$  
\item $\syz_3^R(k)$   is indecomposable if and only if $I$ is a principal ideal such that $xy \not\in I$ for any $x,y$ with $\n=(x,y)$ or $R$ is a zero-dimensional complete intersection.
\item $\syz_i^R(k)$ is indecomposable for every $i\ge 0$ if and only if $\syz_3^R(k)$ is indecomposable.
\end{enumerate}
\end{thm}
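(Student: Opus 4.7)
The plan is to handle parts (a)--(d) sequentially, building on the structural results of Sections~\ref{sec3}--\ref{sec4} and the recurrence of \autoref{generalGolod}. For simplicity I would assume $I \neq 0$; if $R = S$ is regular the statements can be checked directly from the length-$2$ Koszul resolution of $k$ (part (c) then fails literally since $\syz_3^R(k) = 0$, so the convention there is that ``principal'' means generated by one nonzero element).

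Part (a) is essentially a restatement of \autoref{mindec}: the hypothesis $I \subset \n^2$ forces $R$ not to be a discrete valuation ring, so the three equivalent conditions of \autoref{mindec} reduce precisely to the stated dichotomy. For part (b) I would split on whether $\m$ is decomposable. If it is, \autoref{mdec} gives $\syz_2^R(k) \cong \m \oplus k^a$, which is decomposable since $\m$ is. If $\m$ is indecomposable, \autoref{Nindec} gives $\syz_2^R(k) \cong N \oplus k^a$ with $N$ indecomposable, $\mu(N) \geq 1$, and $a = \dim_k \n(I:\n)/\n I$; this is indecomposable exactly when $a = 0$, i.e.\ when $\n(I:\n) = \n I$.

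For part (c) I split on whether $R$ is a zero-dimensional complete intersection. If so, $R$ is Gorenstein Artinian and every syzygy of $k$ is indecomposable, as noted in the introduction. Otherwise $R$ is Golod by \cite{Sc64}, and \autoref{highersyz} gives $\syz_3^R(k) \cong k^{\mu(I)-1} \oplus \m^{\mu(I)}$. When $\mu(I) \geq 2$ this is visibly decomposable (even in the edge case $\m \cong k$, where $I = \n^2$ forces $\mu(I) = 3$, giving $k^5$). When $\mu(I) = 1$ it equals $\m$, which is indecomposable exactly when, by (a), $xy \notin I$ for any generators $x,y$ of $\n$.

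For part (d), the forward direction is immediate. Conversely, if $\syz_3^R(k)$ is indecomposable then (c) leaves two cases. In the zero-dimensional complete intersection case all syzygies of $k$ are already indecomposable. Otherwise $I = (f)$ is principal nonzero with $xy \notin I$, and applying \autoref{generalGolod} with Koszul homology dimensions $h_1 = \mu(I) = 1$ and $h_2 = \mu(I) - 1 = 0$ collapses the recurrence to $\syz_i^R(k) \cong \syz_{i-2}^R(k)$ for all $i \geq 3$. Hence every syzygy is isomorphic to one of $k$, $\m$, or $\syz_2^R(k)$; the first two are indecomposable, and $\syz_2^R(k)$ is indecomposable by (b) because $(I:\n) = I$ in the UFD $S$ (if $gx = af$ and $gy = bf$ then $f(ay-bx) = 0$ gives $ay = bx$, coprimality of $x,y$ yields $a = cx$, and thus $g = cf \in I$), so $\n(I:\n) = \n I$. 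The main obstacle is this last reduction: forcing the recurrence of \autoref{generalGolod} to collapse to the $2$-periodic pattern, and then reducing the indecomposability of $\syz_2^R(k)$ to a direct colon-ideal computation in $S$.
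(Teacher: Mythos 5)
Your proof is correct and agrees with the paper's on parts (a), (b), and the main computation in (c) via \autoref{highersyz}. For part (d), however, you take a genuinely different route. The paper observes that once $I$ is principal and nonzero, $R$ is a one-dimensional hypersurface (hence Gorenstein) and $\m$ is a maximal Cohen--Macaulay $R$-module with no free summand, and then appeals to the standard fact that over a Gorenstein local ring the syzygy functor preserves and reflects indecomposability of such modules; thus $\m$ indecomposable forces all $\syz_i^R(\m)$, hence all $\syz_i^R(k)$, to be indecomposable. You instead work directly from the Golod recurrence in \autoref{generalGolod} with $h_1 = 1$, $h_2 = 0$, which collapses it to $\syz_i^R(k) \cong \syz_{i-2}^R(k)$ for $i \geq 3$, reducing everything to $k$, $\m$, and $\syz_2^R(k)$; you then verify $\syz_2^R(k)$ is indecomposable via part (b) and an explicit colon computation showing $I : \n = I$ for a principal ideal $I = (f) \neq 0$ in the UFD $S$ (which could also be seen more quickly by noting $\n$ has height $2$ and so lies in no associated prime of the height-one ideal $(f)$). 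Your route is more elementary in that it avoids invoking the theory of MCM modules over Gorenstein rings, at the cost of the extra colon-ideal computation; the paper's route is shorter once that machinery is admitted.

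One small arithmetic slip: in the edge case $I = \n^2$ you write $\syz_3^R(k) \cong k^5$, but here $\m \cong k^2$ and $\mu(I) = 3$, so $\syz_3^R(k) \cong k^{\mu(I)-1} \oplus \m^{\mu(I)} \cong k^2 \oplus k^6 \cong k^8$. This does not affect the argument, since the point is only that the module is decomposable.
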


\begin{proof}

Part (a) follows from \autoref{mindec}. For part (b), notice that if $\m$ is decomposable then $\syz^{R}_{1}(\m)$ is decomposable. Now the assertion follows from \autoref{Nindec}. 

For parts (c) and (d), we may assume that $R$ is neither regular nor a zero-dimensional complete intersection, since otherwise all syzygy modules of $k$ are indecomposable. By \autoref{highersyz} $\syz_3^R(k)$ is indecomposable if and only if $I$ is principal and $\m$ is indecomposable.  Since then $R$ is Gorenstein and $\m$ is a maximal Cohen-Macaulay $R$-module,  $\m$ is indecomposable if and only if one or all of its syzygies are indecomposable. Now part (d) follows and part (c) is a consequence of (a).
\end{proof} 
%
\bibliographystyle{acm}
\bibliography{references}

\begin{thebibliography}{10}

\bibitem{AH96}
{\sc Aramova, A., and Herzog, J.}
\newblock Koszul cycles and {E}liahou-{K}ervaire type resolutions.
\newblock {\em J. Algebra 181\/} (1996), 347--370.

\bibitem{AN}
{\sc Artin, M., and Nagata, M.}
\newblock Residual intersections in {C}ohen-{M}acaulay rings.
\newblock {\em J. Math. Kyoto Univ. 12\/} (1972), 307--323.

\bibitem{A90}
{\sc Avramov, L.}
\newblock Problems on infinite free resolutions.
\newblock In {\em Free resolutions in commutative algebra and algebraic
  geometry ({S}undance, {UT}, 1990)}, vol.~2 of {\em Res. Notes Math.} Jones
  and Bartlett, Boston, MA, 1992, pp.~3--23.

\bibitem{A96}
{\sc Avramov, L.}
\newblock Infinite free resolutions.
\newblock In {\em Six lectures on commutative algebra ({B}ellaterra, 1996)},
  vol.~166 of {\em Progr. Math.} Birkh\"auser, Basel, 1998, pp.~1--118.

\bibitem{AGP97}
{\sc Avramov, L., Gasharov, V.~N., and Peeva, I.~V.}
\newblock Complete intersection dimension.
\newblock {\em Inst. Hautes \'Etudes Sci. Publ. Math.}, 86 (1997), 67--114.

\bibitem{Avramov2010}
{\sc Avramov, L.~L.}
\newblock Infinite free resolutions.
\newblock In {\em Six lectures on commutative algebra}, Mod. Birkh\"{a}user
  Class. Birkh\"{a}user Verlag, Basel, 2010, pp.~1--118.

\bibitem{BF85}
{\sc Backelin, J., and Fr\"oberg, R.}
\newblock Koszul algebras, {V}eronese subrings and rings with linear
  resolutions.
\newblock {\em Rev. Roumaine Math. Pures Appl. 30\/} (1985), 85--97.

\bibitem{CP}
{\sc Corso, A., and Polini, C.}
\newblock Links of prime ideals and their {R}ees algebras.
\newblock {\em J. Algebra 178}, 1 (1995), 224--238.

\bibitem{Golod}
{\sc Golod, E.~S.}
\newblock Homologies of some local rings.
\newblock {\em Dokl. Akad. Nauk SSSR 144\/} (1962), 479--482.

\bibitem{M2}
{\sc Grayson, D.~R., and Stillman, M.~E.}
\newblock Macaulay2, a software system for research in algebraic geometry.
\newblock Available at \url{http://www2.macaulay2.com}.

\bibitem{Gu67}
{\sc Gulliksen, T.~H.}
\newblock A note on the homology of local rings.
\newblock {\em Math. Scand. 21\/} (1967), 296--300.

\bibitem{Gu71}
{\sc Gulliksen, T.~H.}
\newblock A homological characterization of local complete intersections.
\newblock {\em Compositio Math. 23\/} (1971), 251--255.

\bibitem{GU80}
{\sc Gulliksen, T.~H.}
\newblock On the deviations of a local ring.
\newblock {\em Math. Scand. 47\/} (1980), 5--20.

\bibitem{HH13}
{\sc Herzog, J., and Huneke, C.}
\newblock Ordinary and symbolic powers are {G}olod.
\newblock {\em Adv. Math. 246\/} (2013), 89--99.

\bibitem{HRW99}
{\sc Herzog, J., Reiner, V., and Welker, V.}
\newblock Componentwise linear ideals and {G}olod rings.
\newblock {\em Michigan Math. J. 46}, 2 (1999), 211--223.

\bibitem{Le90}
{\sc Lescot, J.}
\newblock S\'eries de {P}oincar\'e{} et modules inertes.
\newblock {\em J. Algebra 132}, 1 (1990), 22--49.

\bibitem{MP15}
{\sc McCullough, J., and Peeva, I.}
\newblock Infinite graded free resolutions.
\newblock In {\em Commutative algebra and noncommutative algebraic geometry.
  {V}ol. {I}}, vol.~67 of {\em Math. Sci. Res. Inst. Publ.} Cambridge Univ.
  Press, New York, 2015, pp.~215--257.

\bibitem{Pee96}
{\sc Peeva, I.}
\newblock {$0$}-{B}orel fixed ideals.
\newblock {\em J. Algebra 184\/} (1996), 945--984.

\bibitem{Sc64}
{\sc Scheja, G.}
\newblock {\"U}ber die {B}ettizahlen lokaler {R}inge.
\newblock {\em Math. Ann. 155\/} (1964), 155--172.

\end{thebibliography}
\end{document}